\newtheorem*{theorema}{Main Theorem}
\newtheorem{prop}[subsection]{Proposition}
\newtheorem{lemma}[subsection]{Lemma}
\newtheorem{sublemma}[subsection]{Sublemma}
\newtheorem{cor}[subsection]{Corollary}
\begin{document}
\title{Asymptotic likelihood of chaos for
smooth families of circle maps}
\author{Hiroki Takahasi}
\address{Department of Mathematics, Kyoto University, Kyoto 606-8502
Japan}
 \email{takahasi@math.kyoto-u.ac.jp}
\begin{abstract}
We consider a smooth two-parameter family
$f_{a,L}\colon\theta\mapsto
\theta+a+L\Phi(\theta)$ of circle maps 
with a finite number of critical points. For sufficiently large
$L$ we construct a set $A_L^{(\infty)}$ of $a$-values of positive
Lebesgue measure for which the corresponding $f_{a,L}$ exhibits an
exponential growth of derivatives along the orbits of the critical
points. Our construction considerably improves the previous one of
Wang and Young for the same class of families, in that the
following asymptotic estimate holds: the Lebesgue measure of
$A_L^{(\infty)}$ tends to full measure in $a$-space as $L$ tends
to infinity.
\end{abstract} \maketitle

\tableofcontents

\section{Introduction}
In the study of one-dimensional dynamical systems, one important
question is: {\it how often are dynamics chaotic?} Here, "often"
should be understood in the sense of Lebesgue measure in parameter
space and "chaotic dynamics" corresponds to maps which have
absolutely continuous invariant probability measures (acip for
short).

In this direction, there is a huge gap between a general belief
and the existing theory. For the quadratic family $x\mapsto
1-ax^2$, for instance, it is believed, and also suggested by
rigorous computations \cite{Tu} that the set of parameters
corresponding to acips should have large Lebesgue measure.
Meanwhile, what is presently known at best is that this set has
positive, yet very small Lebesgue measure \cite{Jak81}.
The aim of this paper is to narrow this gap, for certain smooth
families of maps on the circle.

Let $\Phi\colon S^1=\mathbb R/\mathbb Z\to\mathbb R$ be a Morse
function.
 We consider a two-parameter family of circle maps of the form
$$f_{a,L}\colon\theta\mapsto \theta+a+L\Phi(\theta)\quad a\in[0,1), L>0.$$
The family with $\Phi(\theta)=\sin(2\pi\theta)$ was introduced by
Arnol'd \cite{A65} and played an important role in the creation of
KAM theory. For small $L$, the map is a diffeomorphism, and this
case was intensively studied for its connection with
quasi-periodic motions on invariant tori in conservative systems.
We explore dynamics at the other end of the spectrum, namely, the
case with sufficiently large $L$. Then the map has a finite number
of critical points. Its graph has large slopes outside of a small
neighborhood of the critical points.

The family of circle maps with large $L$ becomes important in the
theory of {\it rank-one strange attractors}, developed by Wang and
Young \cite{WY0} \cite{WY1}, based on the fundamental works of
Jakobson \cite{Jak81}, Benedicks and Carleson \cite{BC91}, Mora
and Viana \cite{MV93}, and others. In brief terms, the theory
indicates that dynamics of strange attractors in certain
physically relevant multi-dimensional systems may be partially
understood by analyzing the above circle family with large $L$.
Indeed, the existence of strange attractors in certain
periodically forced ODEs with fully stochastic behaviors was
rigorously proved along this line \cite{WY1}.


For large $L$, a positive measure set of values of $a$ was
constructed in \cite{WY1} corresponding to maps with a unique
acip. However, their construction seems far from optimal in that
$\liminf_{L\to\infty}{\rm Leb }\left(\{a\in[0,1)\colon
f_{a,L}\text{ has an acip }\}\right)>0$ does not follow.
Meanwhile, it is physically relevant to consider what happens in
the asymptotic case $L\to\infty$. An intuition is that parameters
with acip are in abundance.

The reason for this deficiency is that their construction has to
start with very small parameter intervals containing "good
parameters", and the dependence of the sizes of the intervals on
$L$ is unclear. In this paper we develop another argument and show
that $\lim_{L\to\infty}{\rm Leb}\left(\{a\in[0,1)\colon
f_{a,L}\text{ has a unique acip }\}\right)=1.$ A key ingredient is
to notice that for sufficiently large $L$ it is possible to carry
out an inductive construction taking the whole parameter space
$[0,1)$ as a start-up interval.



\subsection{Statement of the result}
Let $C$ denote the set of critical points of $f_{a,L}$, which does
not depend on $a$. Since all the critical points of $\Phi$ are
non-degenerate, the cardinality of $C$ is constant for all large
$L$. For each $c\in C$, all $a\in[0,1)$ and $i\geq0$, write
$c_i(a)$ for $f_a^{i+1}c$. Let $|\cdot|$ denote the
one-dimensional Lebesgue measure.

Our main theorem states the abundance of parameters for which the
derivatives along orbits of all the critical points grow
exponentially fast under iteration. It is well-known \cite{CE}
that this growth condition implies the existence of acips.
\begin{theorema}
For the above $(f_{a,L})$ there exists $\lambda>0$ such that for
all sufficiently large $L$ there exists a set $A_L^{(\infty)}$ in
$[0,1)$ with positive Lebesgue measure such that for all $a\in
A_L^{(\infty)}$ and each $c\in C$, $|(f_{a,L}^n)'c_0|\geq
L^{\lambda n}$ holds for every $n\geq0$. Moreover
$\lim_{L\to\infty}|A_L^{(\infty)}|=1$ holds.
\end{theorema}

For parameters in $A_L^{(\infty)}$, following \cite{WY0}
\cite{WY1} it is possible to construct a {\it unique} acip $\mu$
for which Lebesgue almost every $\theta\in S^1$ is generic, that
is,
$$\lim_{n\to\infty}\frac{1}{n}\sum_{i=0}^{n-1}\varphi(f_{a,L}^i\theta)
=\int \varphi d\mu\quad\text{for all continuous }\varphi\colon
S^1\to\mathbb R.$$ In particular, parameters corresponding to maps
with periodic attractors are contained in the complement of
$A_L^{(\infty)}$. The same statement also follows from directly
showing that all periodic points are hyperbolic repelling for
parameters in the theorem.

Our construction yields an explicit measure estimate in terms of
$L$ (Proposition \ref{measure}), and as a by-product gives a bound
on the speed of the convergence $|A_L^{(\infty)}|\to1$ as follows:
the measure of the complement of $A_L^{(\infty)}$ decreases to
zero as $L\to\infty$, faster than any power of $L^{-1}$.

\subsection{Outline of a proof}
In the context of one-dimensional maps with critical points, the
existence of acips for a positive measure set of parameters was
first proved by Jakobson \cite{Jak81}. See \cite{BC85} \cite{G87}
\cite{Ry} \cite{BC91} \cite{T93a} \cite{T93b} \cite{MS} \cite{TTY}
\cite{Y99} \cite{S03} for alternative arguments and
generalizations.

An outline of the proof of the theorem is similar in spirit to
that of \cite{Jak81}, and (hence) to those of all the subsequent
papers. The positive measure set $A_L^{(\infty)}$ is constructed
by induction: at each step we get rid of undesirable parameters
for which the corresponding maps may not have acips. In doing this
we bring together ideas from Benedicks and Carleson \cite{BC85}
\cite{BC91}, Tsujii \cite{T93a} \cite{T93b}, and develop them
further.


Key constants are $\sigma,\lambda,\alpha,N,L$, chosen in this
order. we have $\sigma,\alpha\ll1$ and $N,L\gg1$. The choice of
them are made explicit afterwards.

Our induction scheme is divided into two parts. The first part
consists of finite steps $0,1,2,\cdots,N$. The second part
consists of the remaining steps $N+1,N+2,\cdots$. At each step $n$
we construct a set $A^{(n)}$. The parameter set in the main
theorem is given by $A^{(\infty)}=\bigcap_{n\geq0}A^{(n)}$.

For $0\leq n\leq N$, let $A^{(n)}$ denote the set of all
$a\in[0,1)$ such that:
\begin{equation}\label{mis}
d(c_i(a),C)\geq\sigma \text{ for every }i\in[0,n] \text{ and }c\in
C.\end{equation} 
It turns out that parameters in $A^{(N)}$ enjoy a uniformly
expanding property outside of a small neighborhood of the critical
points of fixed size (Corollary \ref{outside}). Since our primary
interest is an exponential growth of derivatives, this property
permits us to concentrate on returns of critical orbits to the
inside of this small neighborhood.

Condition (\ref{mis}) for every $n$ is satisfied only for
parameters in a set with zero Lebesgue measure. To get a set of
positive measure we need to relax this condition. For each $n\geq
N$
and $c\in C$ 
we introduce two conditions:
\begin{itemize}
\renewcommand{\labelitemi}{$(X)_{c,n}$}
\item
$|(f_a^{j-i})'c_i|\geq L
\cdot\min\{\sigma, L^{-\alpha i}\}\text{ for every }0\leq i<j\leq
n;$ 
\renewcommand{\labelitemi}{$(Y)_{c,n}$}
\item
$|(f_a^i)'c_0|\geq L^{\lambda i}\text{ for every }0\leq i\leq n.$
\end{itemize}
We say $f_a$ satisfies $(X)_n$ if $(X)_{n,c}$ holds for each $c\in
C$. The meaning of $(Y)_n$ is analogous. There two conditions are
taken as assumptions of induction at step $n$.

To recover the assumptions of induction at the next step $n+1$, we
exclude from further consideration all parameters in $A^{(n)}$ for
which some analytic condition leading to $(X)_{n+1}$ $(Y)_{n+1}$
fails. This condition is introduced in section 5. The remaining
parameters constitute $A^{(n+1)}$.


This paper is organized as follows. In section 2 we prove three
lemmas which will be frequently used later. In section 3 we
perform the first part of the inductive steps $0,1,\cdots,N$ and
estimate the measure of $A^{(0)}$, $A^{(1)},\cdots,A^{(N)}$. In
section 4 we establish a common technique, a recovery of
expansion, which will be used to estimate the measure of $A^{(n)}$
for $n>N$. In section 5 we introduce condition $W_n$ which defines
the set $A^{(n)}$ for $n>N$. In section 6 we estimate the measure
of $A^{(\infty)}.$

Unless otherwise stated, we always assume that $L$ is sufficiently
large.


\section{Fundamental lemmas}
We prove three lemmas which will be frequently used later. Lemma
\ref{dist} gives distortion bounds for iterations of one fixed
map. Lemma \ref{samp} gives distortion bounds for critical values
for different parameters. Proofs are similar, by virtue of Lemma
\ref{trans} which asserts a similarity between space and parameter
derivatives, allowing us to transfer estimates in phase space to
parameter space.

Write $f$ for $f_{a,L}$.
 Let $C_{\varepsilon}$ denote the $\varepsilon$-neighborhood of
$C$. There exists $K_0\geq1$ depending only on $\Phi$ and small
$\varepsilon>0$ such that for all large $L$ we have:
\begin{align*}
&K_0^{-1}L|c-\theta|^2\leq|f(c)-f(\theta)|\leq K_0L |c-\theta|^2
\quad\text{ for }c\in C\text{
and }\theta\in C_\varepsilon;\\
&K_0^{-1}L|c-\theta|\leq|f'\theta|\leq K_0L
|c-\theta|\quad\text{ for }c\in C\text{ and }\theta\in C_\varepsilon;\\
&|f'|,|f''|\leq K_0L.
\end{align*}

\subsection{Distortion in phase space}
For $\theta\in S^1$, $n\geq1$ and $i\in[0,n-1]$, define
\begin{equation}\label{Theta}
d_i(a,\theta)= |(f_a^i)'\theta|^{-1}\cdot|f_a'(f_a^i\theta)|,
\end{equation}
when it makes sense. Fix $\beta\in\left(\frac{3}{2},2\right)$ and
define
$$D_n(a,\theta)=L^{-\beta}\cdot\left[\sum_{0\leq i\leq n-1}
d_i^{-1}(a,\theta)\right]^{-1}.$$ Put
$$K=\exp\left(2K_0L^{1-\beta}\right).$$
Note that $K\to1$ as $L\to\infty$.

\begin{lemma}\label{dist}
If $\theta\in S^1$, $n\geq1$ and $f_a^i\theta\notin C$ holds for
every $0\leq i\leq n-1$, then
$$\frac{|(f_a^n)'\varphi|}{|(f_a^n)'\psi|}\leq K\quad\text{ for all }
\varphi,\psi\in[\theta-D_n(a,\theta),\theta+D_n(a,\theta)].$$
\end{lemma}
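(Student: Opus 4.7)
The plan is to reduce the claim to an additive telescoping sum and prove it by induction on the iterate, using the definition of $D_n$ to make the estimates close up.

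First, I would take logarithms and telescope:
\[
\log \frac{|(f_a^n)'\varphi|}{|(f_a^n)'\psi|} = \sum_{i=0}^{n-1}\bigl(\log|f_a'(f_a^i\varphi)|-\log|f_a'(f_a^i\psi)|\bigr).
\]
Using $\|f_a''\|_\infty\leq K_0L$ and the mean value theorem on $f_a'$, each summand is, up to a factor of the form $\log(1+x)\leq 2x$ valid once $x\ll 1$, bounded by
\[
2K_0L\cdot\frac{|f_a^i\varphi-f_a^i\psi|}{|f_a'(f_a^i\psi)|}.
\]
So the whole problem reduces to controlling $|f_a^i\varphi-f_a^i\psi|$ and showing $|f_a'(f_a^i\psi)|$ is comparable to $|f_a'(f_a^i\theta)|$.

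The key is a simultaneous induction on $m\in[0,n]$ asserting:
\begin{itemize}
\item[(A)$_m$] For every $\varphi,\psi$ in the interval of radius $D_n(a,\theta)$ around $\theta$, $|f_a^m\varphi-f_a^m\psi|\le 2|(f_a^m)'\theta|\cdot|\varphi-\psi|$; in particular the whole image interval lies in a $C^0$-small neighborhood of $f_a^m\theta$, so it avoids $C$ and $|f_a'(f_a^m\psi)|\ge \tfrac{1}{2}|f_a'(f_a^m\theta)|$.
\item[(B)$_m$] $\displaystyle\sum_{i=0}^{m-1}\Bigl|\log\frac{|f_a'(f_a^i\varphi)|}{|f_a'(f_a^i\psi)|}\Bigr|\le 2K_0L^{1-\beta}\cdot\frac{\sum_{i=0}^{m-1}d_i^{-1}(a,\theta)}{\sum_{i=0}^{n-1}d_i^{-1}(a,\theta)}.$
\end{itemize}
Base $m=0$ is trivial. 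For the inductive step, (B)$_m$ combined with exponentiation gives $|(f_a^m)'\varphi|\le K^{1/?}\cdot|(f_a^m)'\theta|$, hence (A)$_{m+1}$ follows from (A)$_m$ by applying (A)$_m$ to bound the previous step's distance, multiplying by $|f_a'(f_a^m\eta)|\le 2|f_a'(f_a^m\theta)|$, and invoking the definition of $d_m$. To pass from (A)$_{m+1}$ to (B)$_{m+1}$, we use the MVT estimate displayed above, together with (A)$_m$ and the lower bound on $|f_a'(f_a^i\psi)|$, to see the new term is at most
\[
2K_0L\cdot\frac{2|(f_a^m)'\theta|\cdot 2D_n(a,\theta)}{\tfrac12|f_a'(f_a^m\theta)|}
\;\le\; 8K_0L\cdot d_m^{-1}(a,\theta)\cdot D_n(a,\theta) \;=\; 8K_0L^{1-\beta}\cdot\frac{d_m^{-1}(a,\theta)}{\sum_{i=0}^{n-1}d_i^{-1}(a,\theta)},
\]
which (after adjusting the absolute constants in $K$ or absorbing the factor $8$ using $L\gg1$) fits the telescoping budget. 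Setting $m=n$ yields $\log\bigl[|(f_a^n)'\varphi|/|(f_a^n)'\psi|\bigr]\le 2K_0L^{1-\beta}$, i.e.\ the required bound $K=\exp(2K_0L^{1-\beta})$.

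The main obstacle is organising the bootstrap cleanly: because the distortion bound (B)$_m$ and the spatial bound (A)$_m$ each rely on the other one step earlier, the constants have to be chosen so the induction propagates without drift. The mechanism that lets this work is precisely the definition $D_n=L^{-\beta}/\sum d_i^{-1}$, which makes the per-step contribution $L\cdot d_m^{-1}\cdot D_n$ telescope to a quantity of size $L^{1-\beta}\ll 1$; all subsequent constants (the $2$'s, the $8$, the replacement of $|f_a'(f_a^m\psi)|$ by $\tfrac12|f_a'(f_a^m\theta)|$) are swallowed by the $L\to\infty$ regime and absorbed into $K$ with the stated factor $2K_0$.
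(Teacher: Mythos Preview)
Your proposal is correct and follows essentially the same approach as the paper: both arguments telescope $\log|(f_a^n)'\varphi|-\log|(f_a^n)'\psi|$, induct on the iterate, and exploit the definition of $D_n$ so that the per-step contribution is proportional to $d_j^{-1}\big/\sum_i d_i^{-1}$, yielding a total of $2K_0L^{1-\beta}=\log K$. The only organisational difference is packaging: the paper collapses your pair of statements (A)$_m$, (B)$_m$ into a single inductive inequality $|f^jI|\sup_{f^jI}|f''|/|f'|\le \log K\cdot d_j^{-1}\big/\sum_i d_i^{-1}$, from which both the spatial bound on $|f^jI|$ and the lower bound $|f'|\ge(1-KK_0L^{1-\beta})|f'(f^j\theta)|$ are derived as consequences rather than carried as separate hypotheses.
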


\begin{proof}
Write $f$, $d_i$, $D_n$ for $f_a$, $d_i(a,\theta)$,
$D_n(a,\theta)$ correspondingly. Let $I=[\theta-D_n,\theta+D_n].$
It suffices to prove the following for $j=0,\cdots,n-1:$
\begin{equation}\label{disteq1}
|f^jI|\sup_{f^jI}\frac{|f''|}{|f'|}\leq \log K\cdot
d_j^{-1}\left[\sum_{0\leq i\leq
n-1}d_i^{-1}\right]^{-1}.\end{equation} Indeed, summing this over
all $j=0,1,\cdots,n-1$ gives
\begin{align*}
\sup_{\theta,\psi\in I}\log
\frac{|(f^n)'\theta|}{|(f^n)'\psi|}&\leq\sum_{0\leq j\leq
n-1}\sup_{\theta,\psi\in I}\log \frac{|f'(f^{j}\theta)|}
{|f'(f^{j}\psi)|}\\
&\leq\sum_{0\leq j\leq n-1}|f^jI|\sup_{
f^jI}\frac{|f''|}{|f'|}\\
&\leq \log K.\end{align*}

We prove (\ref{disteq1}) by induction on $j$. We only give a proof
for the general step of the induction. A proof for the initial
step (the case $j=0$) is completely analogous.

Summing (\ref{disteq1}) over all $j=0,1,\cdots, k-1$ implies
$\frac{|(f^k)'\varphi|}{|(f^k)'\psi|}\leq K$ for all
$\varphi,\psi\in I.$ Hence
\begin{align*}|f^kI|&\leq K|(f^k)'\theta||I|\\
&= KD_nd_k^{-1}|f'(f^k\theta)|\quad\text{by the definition of
$d_k$ }\\
&\leq KL^{-\beta}|f'(f^k\theta)|d_k^{-1} \left(\sum
d_i^{-1}\right)^{-1}\quad\text{by the definition of $D_n$
}\\
&\leq KL^{-\beta}|f'(f^k\theta)|.\end{align*} Hence, for all
$\psi\in I$ we have
\begin{align*}|f'(f^k\theta)-f'(f^k\psi)|&\leq
K_0L|f^kI|\leq KK_0L^{1-\beta}|f'(f^k\theta)|.
\end{align*}
This yields $|f'|\geq (1-KK_0L^{1-\beta})|f'(f^k\theta)|$ on
$f^kI$, and therefore
\begin{align*}
|f^kI|\sup_{f^kI}\frac{|f''|}{|f'|} &\leq
\frac{KK_0L^{1-\beta}}{1-KK_0L^{1-\beta}}\cdot
d_{k}^{-1}\left(\sum d_i^{-1}\right)^{-1}\\
&\leq 2K_0L^{1-\beta}\cdot d_{k}^{-1}\left(\sum
d_i^{-1}\right)^{-1},
\end{align*}
where the last inequality is because of $K\to1$ as $L\to\infty$.
\end{proof}

\subsection{Transversality}
For $c\in C$ and $i\geq0$, write $c_i'(a)= \frac{dc_i}{da}(a)$.
\begin{lemma}\label{trans}
Let $f_a$ satisfy ${(Y)}_{n,c}$. Then we have
$$1-L^{-\lambda/2}\leq\frac{\left|c_{n}'(a)\right|}
{\left|(f_a^n)'(c_0(a))\right|}\leq 1+L^{-\lambda/2}.$$
\end{lemma}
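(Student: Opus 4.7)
The strategy is to write down a closed-form expression for $c_n'(a)$ and then read off the bound from a geometric series. Because $f_a(\theta)=\theta+a+L\Phi(\theta)$ has $\partial f_a/\partial a\equiv 1$ and the critical point $c$ does not depend on $a$, differentiating $c_n(a)=f_a(c_{n-1}(a))$ in $a$ produces the linear recursion
$$c_n'(a)=1+f_a'(c_{n-1}(a))\cdot c_{n-1}'(a),\qquad c_0'(a)=1.$$

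Set $D_k=(f_a^k)'(c_0(a))=\prod_{j=0}^{k-1}f_a'(c_j(a))$, with the convention $D_0=1$. Dividing the recursion by $D_n$ and using $D_n=f_a'(c_{n-1}(a))\,D_{n-1}$ yields
$$\frac{c_n'(a)}{D_n}-\frac{c_{n-1}'(a)}{D_{n-1}}=\frac{1}{D_n},$$
which telescopes to the clean identity
$$\frac{c_n'(a)}{(f_a^n)'(c_0(a))}=1+\sum_{k=1}^{n}\frac{1}{(f_a^k)'(c_0(a))}.$$
The leading $1$ here is precisely the ratio we want to establish; everything reduces to showing that the sum on the right is small. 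The hypothesis $(Y)_{n,c}$ says $|(f_a^k)'(c_0(a))|\geq L^{\lambda k}$ for $1\leq k\leq n$, so the sum is bounded in absolute value by
$$\sum_{k=1}^{n}L^{-\lambda k}\;\leq\;\frac{L^{-\lambda}}{1-L^{-\lambda}},$$
and for $L$ sufficiently large this is $\leq L^{-\lambda/2}$. Taking absolute values and applying the triangle inequality in both directions gives the two-sided bound in the statement.

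I do not anticipate a genuine obstacle: the argument is a single-line linear recursion solved by telescoping, followed by a geometric-series tail estimate. The real content of the lemma is conceptual rather than computational — it is the "transversality" mechanism that lets us identify the parameter derivative $c_n'(a)$ with the space derivative $(f_a^n)'(c_0(a))$ up to a factor $1\pm L^{-\lambda/2}$, which is exactly what will be needed in Lemma \ref{samp} to transfer distortion estimates from phase space to parameter space.
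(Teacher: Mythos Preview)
Your proof is correct and follows essentially the same route as the paper: both derive the identity $c_n'(a)/(f_a^n)'(c_0(a))=1+\sum_{k=1}^n 1/(f_a^k)'(c_0(a))$ from the recursion $c_n'(a)=1+f_a'(c_{n-1}(a))c_{n-1}'(a)$ (the paper by expanding and dividing, you by telescoping), then bound the sum via $(Y)_{n,c}$ and a geometric series.
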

\begin{proof}
Since $c'_{n}(a)=1+f_a'(c_{n-1}(a))c_{n-1}'(a),$ we have
$$c_{n}'(a)=1+f_a'(c_{n-1})+f_a'(c_{n-1})
f_a'(c_{n-2})+\cdots+f_a'(c_{n-1})f_a'(c_{n-2}) \cdots
f_a'(c_{1})f_a'(c_{0}).$$ Dividing both sides by
$(f_a^n)'(c_0)=f_a'(c_{n-1})f_a'(c_{n-2})\cdots
f_a'(c_{1})f_a'(c_{0})$ gives
$$\frac{c_n'(a)}{(f_a^n)'c_0}=1+\sum_{i=1}^{n}\frac{1}
{(f^i)'(c_0)}.$$ Hence
$$1-\sum_{i=1}^{n}
\frac{1}{|(f^i)'c_0|}\leq
\left|\frac{c_n'(a)}{(f_a^n)'c_0}\right|\leq1+
\sum_{i=1}^{n}\frac{1}{|(f^i)'c_0|}.$$ ${(Y)}_{n,c}$ yields the
desired inequality.
\end{proof}

\subsection{Distortion in parameter space}
We transfer the distortion estimate in Lemma \ref{dist} to
parameter space.
 For $c\in C$ and $n\geq1$, Define
$$\hat\Delta_n(a_*,c)=[a_*-D_n({a_*},c_0(a_*)),
a+D_n({a_*},c_0(a_*))].$$ Let
$$K'=\exp\left(L^{-\frac{1}{4}}+3\right).$$

\begin{lemma}\label{samp}
Let $f_{a_*}$ satisfy ${(X)}_{n,c}$ and ${(Y)}_{n,c}$. Then
$$\frac{|c_n'(a)|}{|c_n'(b)|}\leq K'\quad\text{for all }
a,b\in\hat\Delta_n(a_*,c).$$
\end{lemma}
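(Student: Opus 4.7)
The idea is to mimic the proof of Lemma \ref{dist}, using Lemma \ref{trans} to transfer estimates between phase and parameter space. Two observations are crucial. First, since $f_{a,L}(\theta)=\theta+a+L\Phi(\theta)$, the space derivative $f_a'(\theta)=1+L\Phi'(\theta)$ does not depend on $a$; denote this common derivative by $f'$. Second, $c_0(a)=c+a+L\Phi(c)$ satisfies $c_0'(a)\equiv 1$, so the set $\{c_0(a):a\in\hat\Delta_n(a_*,c)\}$ is precisely the phase-space interval $[c_0(a_*)-D_n,c_0(a_*)+D_n]$ on which Lemma \ref{dist}, applied to $f_{a_*}$ at $\theta=c_0(a_*)$, gives distortion at most $K$ (the orbit avoids $C$ because $(X)_{n,c}$ forces $|f'(c_i(a_*))|\geq L\min\{\sigma,L^{-\alpha i}\}>0$).

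The plan is as follows. First I would check that $(X)_{n,c}$ and $(Y)_{n,c}$ propagate from $a_*$ to all $a\in\hat\Delta_n(a_*,c)$ with only a negligible loss in constants, so that Lemma \ref{trans} may be invoked at every such $a$. Applying that lemma at $a$ and $b$ yields
$$\frac{|c_n'(a)|}{|c_n'(b)|}\leq\frac{1+L^{-\lambda/2}}{1-L^{-\lambda/2}}\cdot\frac{|(f_a^n)'(c_0(a))|}{|(f_b^n)'(c_0(b))|},$$
whose prefactor accounts for the $e^{L^{-1/4}}$ portion of $K'$ (upon choosing $\lambda$ with $\lambda/2\geq 1/4$). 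Since $f_a'=f_b'=f'$, the remaining ratio telescopes and, using $|f''|\leq K_0 L$, satisfies
$$\log\frac{|(f_a^n)'(c_0(a))|}{|(f_b^n)'(c_0(b))|}\leq\sum_{i=0}^{n-1}\frac{K_0 L\cdot|c_i(a)-c_i(b)|}{(1-o(1))\,|f'(c_i(a_*))|}.$$

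The heart of the proof is the inductive bound $|c_i(a)-c_i(b)|\leq C D_n|(f_{a_*}^i)'(c_0(a_*))|$ for an absolute constant $C$ and all $0\leq i\leq n-1$. Since $f_a=f_{a_*}+(a-a_*)$, we have
$$c_i(a)-c_i(b)=\bigl[f_{a_*}(c_{i-1}(a))-f_{a_*}(c_{i-1}(b))\bigr]+(a-b),$$
so the mean value theorem combined with the inductive hypothesis gives the required recursion; the extra additive $(a-b)$ term is absorbed into the multiplicative factor by $|(f_{a_*}^i)'(c_0(a_*))|\geq L^{\lambda i}$, which is $(Y)_{n,c}$. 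Substituting this bound and recognizing $\sum_{i=0}^{n-1}|(f_{a_*}^i)'(c_0(a_*))|/|f'(c_i(a_*))|$ as $\sum d_i^{-1}(a_*,c_0(a_*))$, together with $D_n\sum d_i^{-1}=L^{-\beta}$ and $\beta>3/2$, bounds the total distortion by $O(L^{1-\beta})\ll 1$, which is comfortably absorbed by the additive $3$ in the exponent of $K'$.

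The main obstacle is the bootstrap character of the argument: the mean-value step requires controlling $|f'|$ along the orbits of nearby parameters, which in turn requires that those orbits remain close to the orbit of $a_*$ in phase space. That closeness is furnished by the very inductive estimate being proved, provided the oscillation of $f'$ is controlled by Lemma \ref{dist} on the phase-space interval $[c_0(a_*)-D_n,c_0(a_*)+D_n]$. Assembling these pieces carefully, so that the extension of $(X)$ and $(Y)$ to $\hat\Delta_n$ and the inductive phase-space closeness feed into each other without circularity, is the principal technical task; once done, the telescoping gives the asserted bound.
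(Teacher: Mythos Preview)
Your outline is correct and close in spirit to the paper's argument, but the organization is genuinely different in one respect that is worth noting. You want to first propagate $(X)_{n,c}$ and $(Y)_{n,c}$ from $a_*$ to every $a\in\hat\Delta_n$, so that Lemma \ref{trans} can be applied at each such $a$; you correctly identify this bootstrap as the main technical point. The paper sidesteps this step entirely: it never invokes Lemma \ref{trans} at any parameter other than $a_*$. Instead it inducts directly on the ratio $|c_j'(a)|/|c_j'(b)|$ using the exact recursion $c_{j+1}'(a)=1+f'(c_j(a))\,c_j'(a)$, i.e.\ $\frac{c_{j+1}'(a)}{c_j'(a)}=f'(c_j(a))+\frac{1}{c_j'(a)}$. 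The inductive hypothesis on $|c_j'(a)/c_j'(b)|$, combined with Lemma \ref{trans} \emph{at $a_*$ only}, simultaneously controls (i) the phase-space width $|c_j(\hat\Delta_n)|$, hence the oscillation of $f'(c_j(\cdot))$, and (ii) the additive correction $1/|c_j'(a)|$, which is small because $|c_j'(a)|\geq K'^{-1}|c_j'(a_*)|\geq \tfrac{1}{2}K'^{-1}L^{\lambda j}$. Summing the resulting bound on $\bigl|\log\frac{|c_{j+1}'(a)|}{|c_j'(a)|}-\log\frac{|c_{j+1}'(b)|}{|c_j'(b)|}\bigr|$ gives $K'$. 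What the paper's route buys is that $(X)$ and $(Y)$ are used only at the single parameter $a_*$ where they are hypothesized; what your route buys is a cleaner separation into ``apply Lemma \ref{trans}'' plus ``pure phase-space distortion''.

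One small correction: you cannot ``choose $\lambda$ with $\lambda/2\ge 1/4$''. In the paper $\lambda=\lambda_0/9$ with $\lambda_0=\tfrac12-\tfrac\beta4<\tfrac18$, so $\lambda<\tfrac{1}{72}$ is fixed. The prefactor $(1+L^{-\lambda/2})/(1-L^{-\lambda/2})$ is therefore not bounded by $e^{L^{-1/4}}$; it is, however, bounded by a constant less than $e^3$ for large $L$, so it is absorbed by the other factor of $K'=\exp(L^{-1/4}+3)$. This does not affect the validity of your argument, only the bookkeeping of which part of $K'$ absorbs which term.
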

\begin{proof}


Write $d_i$, $D_n$, $\hat\Delta_n$ for $d_i(c_0(a_*))$,
$D_n(c_0(a_*))$, $\hat\Delta_n(a_*,c)$ correspondingly. We argue
by induction on $k\in[0,n-1]$, with the assumption that
$\frac{|c_j'(a)|}{|c_j'(b)|}\leq K'$ holds for all $a,b\in\Delta$
and $j=0,1,\cdots,k$.  Note that this assumption for $k=0$ is
trivially satisfied by $c_0'=1$.
\begin{sublemma}\label{sub}
For all $j=0,1,\cdots,k$ we have
\begin{equation*}\log\frac{|(f_{a}')c_j(a)|} {|(f_{b}')c_j(b)|}
\leq \frac{2K_0K'L^{1-\beta}}{1-2K_0K'L^{1-\beta}}\cdot
d_j^{-1}\cdot d_j^{-1}\left[\sum_{0\leq i\leq
n-1}d_i^{-1}\right]^{-1}\quad\text{for all
}a,b\in\hat\Delta_n(a_*,c).\end{equation*}
\end{sublemma}
\begin{proof}
We have
\begin{align*}\left|f_{a}'c_j(a)-f_{b}'c_j(b)\right|
&\leq\left|f_{a}'c_j(a)-f_{b}'c_j(a)\right|+
\left|f_{b}'c_j(a)-f_{b}'c_j(b)\right|\\
&=\left|f_{b}'c_j(a)-f_{b}'c_j(b)\right|\quad\text{since
$f_a'\theta-f_b'\theta=0$}\\
&\leq K_0L|c_j(\Delta)|\quad\text{since $|f''|\leq
K_0L$}.\end{align*} Using the assumption of induction and Lemma
\ref{trans},
\begin{align*}
|c_j(\Delta)| &\leq
2K'|(f_{a_*}^j)'c_0(a_*)|d_j d_j^{-1}D_n\\
&= 2K'L^{-\beta}|(f_{a_*}')c_j(a_*)|d_j^{-1}\left(\sum d_i^{-1}\right)^{-1}\\
&\leq 2K'L^{-\beta}|(f_{a_*}')c_j(a_*)|.
\end{align*}
These two inequalities imply the desired inequality.
\end{proof}

If $j\geq1$, Lemma \ref{trans} and $(Y)_{n,c}$ for $f_{a_*}$ give
$$|c_j'(a)|\geq K'^{-1}|c_j'(a_*)|\geq \frac{K'^{-1}}{2}
|(f_{a_*}^j)'c_0(a_*)|\geq L^{\frac{\lambda j}{2}}.$$ Since
$|c_0'|=1$, the same inequality remains valid if $j=0$. We have
\begin{equation*}
\left|\frac{c_{j+1}'(a)}{c_{j}'(a)}- (f_a')c_{j}(a)\right|
=\left|\frac{1}{c_{j}'(a)}\right|\leq L^{-\frac{\lambda j}{2}}.
\end{equation*}
The first factor in the right hand side of Sublemma \ref{sub} goes
to $0$ as $L\to0$. Using this and $(X)_{n,c}$ give
$$|(f_{a})'c_j(a)| \geq\frac{1}{2}|(f_{a_*})'c_j(a_*)| \geq \frac{
L}{2}\cdot\min\{\sigma,L^{-\alpha j} \}.$$
Hence we have
\begin{equation*}\label{sad}
 \frac{|c_{j+1}'(a)|}
{|c_j'(a)|}\geq  \frac{L}{2}\cdot\min\{\sigma,L^{-\alpha j}\}-
L^{-\frac{\lambda j}{2}}\geq \frac{1}{3}L^{-\alpha
j}.\end{equation*} These three inequalities imply
\begin{equation*}
\left|\log\frac{|c_{j+1}'(a)|}{|c_j'(a)|}-\log|(f_{a})'c_j(a)|\right|\leq
L^{-\frac{\lambda j}{3}}.\end{equation*} By Sublemma \ref{sub},
\begin{equation*}\left|\log\frac{|c_{j+1}'(a)|}
{|c_j'(a)|}- \log|(f_{a_*})'c_j(a_*)|\right| \leq
L^{-\frac{\lambda j}{3}}+\frac{3}{2}\cdot d_j^{-1}\left(\sum
d_i^{-1}\right)^{-1}.
\end{equation*}
Thus, for all $a$, $b\in \hat\Delta_n$,
\begin{equation*}
\left|\log\frac{|c_{j+1}'(a)|}{|c_{j+1}'(b)|}-
\log\frac{|c_{j}'(a)|}{|c_{j}'(b)|}\right| \leq 2L^{-\frac{\lambda
j}{3}}+ 3d_j^{-1}\left(\sum d_i^{-1}\right)^{-1}.
\end{equation*}
Summing this over all $j=1,\cdots,k$ implies
$\frac{|c_{k+1}'(a)|}{|c_{k+1}'(b)|}\leq K',$ which restores the
assumption of the induction.
\end{proof}

\section{Parameter exclusion: special steps}
In this section we perform the fist part of the induction from
step $0$ to $N$. Recall that the parameter sets $A^{(n)}$ for
$0\leq n\leq N$ are given by (\ref{mis}), where we set
\begin{equation}\label{sigma}
\sigma=K_0L^{-1+\frac{\beta}{2}}.
\end{equation}
We estimate the measure of $A^{(n)}$ for $n=0,1,\cdots, N$.

\subsection{Expansion} We consider orbits of critical points which stay
outside of $C_\sigma$. Lemma \ref{trans} transmits the expansion
along these orbits to parameter space. The next lemma asserts that
this expansion is large enough for critical values to completely
wrap $S^1$.
\begin{lemma}\label{wrap0}
Let $n\leq N$, $a\in[0,1)$ and $c\in C$. If $c_i(a)\notin
C_\sigma$ holds for $i=0,1,\cdots,n-1$, then
$c_{n}(\hat\Delta_n(a,c))=[0,1)$.
\end{lemma}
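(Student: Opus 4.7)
The plan is to show that $c_n$ stretches $\hat\Delta_n(a,c)$ into a lifted interval of length at least $1$, forcing $c_n(\hat\Delta_n(a,c))=[0,1)$. The three ingredients are: a strong phase-space expansion extracted from the hypothesis $c_i(a)\notin C_\sigma$, a transfer to parameter space via the identity underlying Lemma \ref{trans}, and distortion control on $\hat\Delta_n(a,c)$ in the spirit of Lemma \ref{samp}.

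First I would combine the Morse bound $|f_a'\theta|\geq K_0^{-1}L\,d(\theta,C)$ on $C_\varepsilon$ and the analogous non-degeneracy $|f_a'|\gtrsim L$ outside $C_\varepsilon$ with the calibration $\sigma=K_0L^{-1+\beta/2}$ to deduce
\[
|f_a'(c_i(a))|\geq K_0^{-1}L\sigma = L^{\beta/2}\qquad (0\leq i\leq n-1),
\]
so that $|(f_a^n)'c_0(a)|\geq L^{n\beta/2}$. A secondary but crucial consequence is
$d_i^{-1}(a,c_0(a))/d_{i-1}^{-1}(a,c_0(a))=|f_a'(c_{i-1}(a))|^2/|f_a'(c_i(a))|\geq L^{\beta-1}/K_0$, so the $d_i^{-1}$ grow geometrically and $\sum_{i=0}^{n-1}d_i^{-1}(a,c_0(a))\leq (1+o(1))\,|(f_a^{n-1})'c_0(a)|/|f_a'(c_{n-1}(a))|$. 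The summability $\sum_{i\geq 1}|(f_a^i)'c_0(a)|^{-1}\leq L^{-\beta/2}/(1-L^{-\beta/2})$ then feeds directly into the identity at the heart of Lemma \ref{trans} and yields $|c_n'(a)|=(1-o(1))|(f_a^n)'c_0(a)|$. For distortion on $\hat\Delta_n(a,c)$ I would mimic the inductive scheme of Lemma \ref{samp}, using this Morse-derived phase-space expansion in place of $(X)_{n,c}$ and $(Y)_{n,c}$; the same algebra delivers $|c_n'(\tilde a)|\geq(1-o(1))|(f_a^n)'c_0(a)|$ for every $\tilde a\in\hat\Delta_n(a,c)$. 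Combining these with $D_n(a,c_0(a))=L^{-\beta}/\sum d_i^{-1}$ gives
\[
|c_n(\hat\Delta_n(a,c))|\geq (1-o(1))\cdot 2L^{-\beta}\,|f_a'(c_{n-1}(a))|^2\geq (2-o(1))\cdot L^{-\beta}\cdot L^\beta\geq 1
\]
for $L$ sufficiently large, and surjectivity of $c_n:\hat\Delta_n(a,c)\to S^1$ follows by continuity.

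The main obstacle is the distortion transfer step. Lemma \ref{samp} is stated under the induction hypotheses $(X)_{n,c}$ and $(Y)_{n,c}$, neither of which is yet in force at this stage of the induction, so I must verify that the proofs of Lemma \ref{samp} and Sublemma \ref{sub} use only summability of $\sum 1/|(f_a^i)'c_0(a)|$ together with a lower bound on $|f_a'(c_j(a))|$ of the form supplied by $(X)_{n,c}$, both of which are already granted here by $|f_a'(c_j(a))|\geq L^{\beta/2}$. A secondary concern is constant chasing: the decisive inequality $L^{-\beta}|f_a'(c_{n-1}(a))|^2\geq 1$ is exactly borderline under the calibration $\sigma=K_0L^{-1+\beta/2}$, so the $K_0$-factors and the geometric-series correction in $\sum d_i^{-1}$ must be absorbed by taking $L$ large enough to reach the target $\geq 1$ rather than merely some positive constant.
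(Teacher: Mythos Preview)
Your approach is correct and essentially identical to the paper's: both invoke the distortion Lemma~\ref{samp} and the transversality Lemma~\ref{trans} to reduce to a lower bound on $|(f_a^n)'c_0(a)|\cdot|\hat\Delta_n(a,c)|$, and obtain that bound from $|f_a'(c_i(a))|\geq K_0^{-1}L\sigma=L^{\beta/2}$ and the resulting geometric control of the $d_i$. The paper likewise applies Lemma~\ref{samp} without further comment (its hypotheses $(X)_{n,c},(Y)_{n,c}$ follow directly from $c_i\notin C_\sigma$), and the borderline constant issue you flag at the end is equally present in the paper's own final substitution.
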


\begin{proof}
We have
\begin{align*}
|c_{n}(\hat\Delta_n(a,c))|&\geq {K'}^{-1}\left| c_{n}'(a)\right|
|\hat\Delta_n(a,c)|
\quad\text{by Lemma \ref{samp}}\\
&\geq\frac{{K'}^{-1}}{1+L} |(f_a^{n})'c_0|
|\hat\Delta_n(a,c)|\quad\text{by Lemma \ref{trans}}.\end{align*}
The assumption gives $ |(f_a^{n})'c_0|\geq
(L\sigma)^{n-i}|(f_a^i)'c_0|$, which yields $|(f^{n})'c_0|d_i\geq
(L\sigma)^{n-i}L\sigma$. Therefore
\begin{align*}|(f^{n})'c_0|^{-1}
|\hat\Delta_n(a,c)|^{-1}&=L^{\beta}\sum_{0\leq i\leq
n-1}|(f^{n})'c_0|^{-1}d_i^{-1}\\&\leq
\sigma^{-1}L^{-1+\beta}\sum_{i\geq1
}(L\sigma)^{-i}\\
&\leq\frac{\sigma^{-1}L^{-1+\beta}}{L\sigma-1}.
\end{align*}
Substituting this into the above inequalities we obtain the
desired inequality.
\end{proof}

\subsection{Amending the definition of parameter intervals}
We now introduce parameter intervals which are central to our
scheme. Let $c\in C$, $n\geq1$, and let $f_a$ satisfy $(X)_{n,c}$,
$(Y)_{n,c}$. As a result of Lemma \ref{trans}, the map $a\in
\hat\Delta_n(a,c)\mapsto c_n(\hat\Delta_n(a,c))$ may not be
injective (we have just shown that this is indeed the case if
$n\leq N$). This causes silly combinatorial problems. To deal with
this we shorten the interval as follows. For a compact interval
$I$ centered at $a$ and $r\in(0,1)$, let $r\cdot I$ denote the
interval centered at $a$ with length $r\cdot|I|$. Define
$$\Delta_n(a,c)=\begin{cases}
&\hat\Delta_n(a,c)\quad\text{if }
|c_n(\hat\Delta_n(a,c))|\leq\frac{1}{3};\\
&\frac{1}{9|c_n(\hat\Delta_n(a,c))|}\cdot\hat\Delta_n(a,c)\quad
\text{otherwise}\end{cases}.$$ By definition, $c_n(\Delta_n(a,c))$
is strictly contained in the half circle centered at $c_n(a)$.

\begin{prop}\label{initial}
For any sufficiently large $N$ there exists $L_0$ such that if
$L\geq L_0$, then for $n=0,1,\cdots,N$ we have
$|A^{(n)}|\geq(1-{^3\sqrt{\sigma}})^{n+1}$.
\end{prop}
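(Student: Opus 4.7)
The plan is to prove $|A^{(n)}| \geq (1-\sqrt[3]{\sigma})^{n+1}$ by induction on $n$. For the base case $n=0$: since each $a \mapsto c_0(a) = c + a + L\Phi(c)$ is a rigid translation of $S^1$, we have $|\{a \in [0,1) : c_0(a) \in C_\sigma\}| = 2\sigma|C|$ for each $c \in C$; a union bound over $c$ gives $|[0,1) \setminus A^{(0)}| \leq 2|C|^2 \sigma$, which falls below $\sqrt[3]{\sigma}$ for $L$ large, since $\sigma = K_0 L^{-1+\beta/2}\to 0$.

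For the inductive step from $n$ to $n+1 \leq N$, I would first check that every $a \in A^{(n)}$ satisfies $(X)_{n+1,c}$ and $(Y)_{n+1,c}$ for each $c \in C$. This follows from the basic bound $|f_a'(\theta)| \geq K_0^{-1}L\,d(\theta, C)$ stated at the start of Section~2: since $c_i(a) \notin C_\sigma$ for $i = 0, \dots, n$ and $\sigma = K_0 L^{-1+\beta/2}$, we have $|f_a'(c_i(a))| \geq L^{\beta/2}$, and choosing $\lambda, \alpha$ much smaller than $\beta/2$ once and for all makes both conditions hold. With these in hand, Lemma \ref{wrap0} applied at level $n+1$ gives $c_{n+1}(\hat\Delta_{n+1}(a,c)) = [0, 1)$, while Lemma \ref{samp} gives distortion $\leq K'$ for $c_{n+1}$ on $\hat\Delta_{n+1}(a,c)$.

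For each fixed $c$, apply Vitali's covering lemma to the family $\{\tfrac{1}{3}\hat\Delta_{n+1}(a, c) : a \in A^{(n)}\}$ (each member centered at its $a$) to extract a disjoint subfamily $\{\tfrac{1}{3}\hat\Delta_{n+1}(a_k, c)\}$ whose triples $\{\hat\Delta_{n+1}(a_k, c)\}$ cover $A^{(n)}$. On each $\hat\Delta_{n+1}(a_k, c)$, a coarea-type change of variables combined with the wrap property and distortion bound yields
\[
\bigl|\{a' \in \hat\Delta_{n+1}(a_k,c) : c_{n+1}(a') \in C_\sigma\}\bigr|
 \leq 2{K'}^{2} \sigma |C| \cdot |\hat\Delta_{n+1}(a_k,c)|.
\]
Summing over $k$ (with $\sum_k |\hat\Delta_{n+1}(a_k,c)| \leq 3$ from disjointness) and then over $c \in C$ gives the absolute bound $|A^{(n)} \setminus A^{(n+1)}| \leq 6 {K'}^{2} \sigma |C|^2$. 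Since $(1-\sqrt[3]{\sigma})^{n+1} \geq 1/2$ for $L$ large and $n \leq N$, this is dominated by $\sqrt[3]{\sigma} \cdot |A^{(n)}|$, closing the induction. The main obstacle is this last measure transfer: Lemma \ref{wrap0} makes $c_{n+1}$ highly multi-to-one on $\hat\Delta_{n+1}(a,c)$, and one must verify that the distortion bound of Lemma \ref{samp} survives the Jacobian summation over all wraps so as to produce a clean factor $2\sigma|C|$, independent of how many times the image covers $S^1$.
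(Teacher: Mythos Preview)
Your inductive scheme is correct and the worry you flag at the end resolves cleanly: on $\hat\Delta_{n+1}(a_k,c)$ the map $a'\mapsto c_{n+1}(a')$ is monotone with derivative varying by at most $K'$ (Lemma \ref{samp}), so if its real lift has image of length $M\geq 1$ (Lemma \ref{wrap0}), the preimage of a set $E\subset S^1$ of measure $\epsilon$ has measure at most $\epsilon(M+1)/\min|c_{n+1}'|\leq 2K'\epsilon|\hat\Delta_{n+1}(a_k,c)|$, independent of the wrap count $M$.  Your Vitali argument then gives the absolute bound $|A^{(n)}\setminus A^{(n+1)}|\leq C_0\sigma$, and since $(1-\sqrt[3]{\sigma})^{N+1}\geq\tfrac12$ once $L\geq L_0(N)$, this is indeed $\leq\sqrt[3]{\sigma}\,|A^{(n)}|$.

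The paper proceeds differently.  Rather than Vitali on the full $\hat\Delta_n$, it first \emph{shortens} these intervals to $\Delta_n(a,c)$ so that $a'\mapsto c_n(a')$ becomes injective on each, then proves a nesting lemma (any two $\Delta_n(a,c)$, $\Delta_n(b,c)$ with $a,b$ in the same bad set are either disjoint or nested) and builds a cover of $B_n(c,\tilde c)$ by maximal such intervals.  Bounded distortion on each $\Delta_n$ then gives $|B_n(c,\tilde c)|\leq 2\sqrt\sigma\,|A^{(n-1)}|$ directly as a \emph{relative} estimate.  Your route is more elementary for this proposition, since it avoids the interval amendment and the nesting lemma; the paper's route, however, introduces exactly the machinery (the shortened $\Delta_n$ and the nesting argument) that is reused in Section~6 for the general steps $n>N$, where a Vitali-type absolute bound would no longer close the induction.
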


\begin{proof}
The inequality for $n=0$ follows from the identity
$f_a\theta-f_b\theta=a-b$. We argue by induction on $n$. For
$(c,\tilde c)\in C\times C$, let
$$B_n(c,\tilde c)=\left\{a\in A^{(n-1)}\setminus A^{(n)}\colon
d(c_n(a),\tilde c)\leq\sigma\right\}.$$ We have
$A^{(n-1)}\setminus A^{(n)}=\bigcup B_n(c,\tilde c)$, where the
union runs over all $(c,\tilde c)\in C\times C$.

\begin{lemma}\label{intersect2}
Let $a$, $b\in B_n(c,\tilde c)$ and assume that (i)
$\Delta_n(a,c)\cap \Delta_n(b,c)\neq\emptyset$; (ii) $b\notin
\Delta_n(a,c)$. Then we have $\Delta_n(a,c)\subset\Delta_n(b,c)$.
\end{lemma}

\begin{proof}
Since $a\in B_n(c,\tilde c)$ we have $|c_n(a)-\tilde c|
\leq\sigma$. This and Lemma \ref{wrap0} together imply the
existence of a unique parameter $\bar a\in
\sqrt{\sigma}\cdot\Delta_n(a,c)$ such that $c_n(\bar a )=\tilde
c$. For the same reason, there exists a unique $\bar b\in
\sqrt{\sigma}\cdot\Delta_n(b,c)$ such that $c_n(\bar b)=\tilde c$.
Since the map $\hat a\in \Delta_n(a,c)\cup\Delta_n(b,c)\to
c_n(\hat a)$ is injective (by the above amendment), we have $\bar
a=\bar b$. This and the assumption (ii) implies that one of the
connected component of
$\Delta_n(a,c)-\sqrt{\sigma}\cdot\Delta_n(a,c)$ is contained in
$\sqrt{\sigma}\cdot\Delta_n(b,c)$, and thus the inclusion holds.
\end{proof}

Let $a_1\in B_n(c,\tilde c)$. We define a finite sequence
$a_1,a_2,\cdots$ in $B_n(c,\tilde c)$ inductively as follows.
Given $a_1,\cdots,a_k$, if $B_n(c,\tilde c
)\subset\bigcup_{j=1}^k\Delta_n(a_j,c)$, then we complete the
definition. Otherwise, choose $a_{k+1}\in B_n(c,\tilde c
)-\bigcup_{j=1}^{k}\Delta_n(a_j,c)$ arbitrarily. By (\ref{mis}),
the lengths of the intervals $\{\Delta_n(a_j,c)\}$ are uniformly
bounded from below, and thus the definition makes sense.

By Lemma \ref{intersect2}, any two of the intervals thus defined
are either disjoint or nested and altogether cover $B_n(c,\tilde c
)$. Moreover, by Lemma \ref{samp} and Lemma \ref{wrap0}, the set
$\Delta_n(a_j,c)-\sqrt{\sigma} \cdot\Delta_n(a_j,c)$ does not
intersect $B_n(c,\tilde c)$. Hence
$$|B_n(c,\tilde c)| \leq\sqrt{\sigma}\cdot\sum_{*}|\Delta_n(a_j,c)|,$$ where
$\sum_*$ denotes the summation over the maximal set of the
subscript $j$ for which the corresponding intervals are pairwise
disjoint. Since $|\Delta_n(a_j,c))|\leq L^{-\beta}$,
\begin{align*}
\sum_{*}|\Delta_n(a_j,c)|&\leq
\frac{|A^{(n-1)}|}{|A^{(n-1)}|}(1+2L^{-\beta})\\&\leq
(1+2L^{-\beta})\frac{|A^{(n-1)}|}{(1-{^3\sqrt\sigma})^{n}}\\
&\leq2|A^{(n-1)}|,
\end{align*}
where the last inequality holds for sufficiently large $L$
depending on $N$. This yields $|B_n(c,\tilde c)|\leq
2\sqrt\sigma|A^{(n-1)}|.$ Hence we have $$|A^{(n-1)}\setminus
A^{(n)}|\leq(\sharp C)^2\cdot 2\sqrt\sigma |A^{(n-1)}| \leq
{^3\sqrt\sigma}|A^{(n-1)}| ,$$ or
$|A^{(n)}|\geq(1-{^3\sqrt\sigma})|A^{(n-1)}|.$ This restores the
assumption of the induction.
\end{proof}

\section{Recovering expansion}
For large $L$, the dynamics is uniformly expanding in most of the
phase space, outside of a small neighborhood of the critical set
$C$. Meanwhile, returns of typical orbits to the inside of this
neighborhood are inevitable. In this section we deal with the loss
of derivatives associated with these returns to keep the further
evolution of derivatives in track.

\subsection{Constants}
We introduce several constants. Let
\begin{equation}\label{lambda0}
\lambda_0=\frac{1}{2}-\frac{\beta}{4}\quad\text{ and
}\quad\lambda=\frac{\lambda_0}{9}.\end{equation} Fix
$\alpha\ll\lambda$ and let
\begin{equation}\label{delta0}
\delta_0=L^{-1+\lambda_0}\quad\text{ and }\quad\delta=L^{-\alpha
N}.\end{equation}

We use three different sizes of neighborhoods of critical points,
given by $\sigma,\delta,\delta_0$. We have $\delta/\delta_0$,
$\delta_0/\sigma\to0$ as $L\to\infty$. The last convergence
follows from $\beta<2$.

\subsection{Bound periods}
Let $c\in C$, and let $f$ satisfy $(X)_{n,c}$, $(Y)_{n,c}$. For
$p\in[1,n-1]$, let
$$I_{p}(c)=\left(c+
\sqrt{L^{-1}D_{p+1}(c_0)},c+ \sqrt{L^{-1}D_{ p}(c_0)}\right].$$
 Let $I_{-p}(c)$
denote the interval which is the mirror image of $I_{p}(c)$ with
respect to $c$. We call $p$ a {\it bound period} for $\varphi\in
I_{p}(c)\cup I_{-p}(c)$. According to Lemma \ref{dist}, the orbit
of $f(\varphi)$ shadows the orbit of $c_0$ for $p$ iterates, with
bounded distortion.

We claim that for all $a\in A^{(N)}$, the intervals
$\{I_{p}(c)\colon 1\leq |p|\leq N,c\in C\}$ altogether cover
$C_{\delta_0}-C_\delta$. Indeed we have
\begin{align*}
L^{-1}D_{N}(c_0)&\leq |(f^{N-1})'c_0|^{-1}
|f'c_{N-1}|\quad\text{by the definition of $D_N(c_0$)}\\
&\leq (K_0^{-1}L\sigma)^{-N+1}K_0L\quad\text{by (\ref{mis})}\\
&<\delta^2\quad\text{by (\ref{sigma}) and (\ref{delta0}).}
\end{align*}
On the other hand,
\begin{align*}
\delta_0^2&= L^{-\beta+2\lambda_0}\sigma^2\quad\text{by (\ref{sigma})}\\
&\leq L^{-\beta}\sigma\quad\text{by (\ref{lambda0})} \\
&\leq L^{-1}D_1(c_0)\quad\text{by $\kappa>1$ and
$d(c_0,C)\geq\sigma$ in (\ref{mis}).}
\end{align*}
Hence $\sqrt{L^{-1}D_{N}(c_0)}<\delta< \delta_0 <
\sqrt{L^{-1}D_1(c_0)}$, which implies the claim.

\begin{lemma}\label{reclem1}
Let $f$ satisfy ${(X)}_{n,c}$, ${(Y)}_{n,c}$ for some $n\geq N$
and $c\in C$. Then for $ p\in[1,n-1]$ and $\varphi\in I_{p}(c)\cup
I_{- p}(c)$ we have:
\begin{itemize}
\renewcommand{\labelitemi}{$\text{(a)}$}
\item $\log|c-\varphi|^{-\frac{1}{\log L}}\leq p\leq\log |c-\varphi|^{-\frac{2}{\lambda\log L}}$;
\renewcommand{\labelitemi}{$\text{(b)}$}
\item
$|(f^{p+1})'\varphi|\geq\max\{|c-\varphi|^{-1+\frac{7\alpha}{\lambda}},
L^{\lambda(p+1)/3}\};$
\renewcommand{\labelitemi}{$\text{(c)}$}
\item if $p\leq N$, then
$|(f^{p+1})'\varphi|\geq L^{\lambda_0(p+1)/3}.$
\end{itemize}

\end{lemma}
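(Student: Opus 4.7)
The plan is to write $x = |c - \varphi|$ and exploit the sandwich $L^{-1}D_{p+1}(c_0) < x^2 \le L^{-1}D_p(c_0)$ coming directly from $\varphi \in I_p(c) \cup I_{-p}(c)$. All three parts follow by combining this sandwich with two-sided estimates on $D_p(c_0)$ from $(X)_{n,c}$ and $(Y)_{n,c}$, together with a shadowing argument via Lemma~\ref{dist}.

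For (a), the upper bound on $p$ comes from $(Y)_{n,c}$: since $d_{p-1}^{-1}(c_0) = |(f^{p-1})'c_0|/|f'c_{p-1}| \ge K_0^{-1}L^{\lambda(p-1)-1}$, one obtains $D_p(c_0) \le K_0 L^{1-\beta-\lambda(p-1)}$, and substituting into $x^2 \le L^{-1}D_p(c_0)$ yields $p \le 2\log x^{-1}/(\lambda\log L)$ up to absorbable constants. For the lower bound on $p$, combine the crude $|(f^i)'c_0| \le (K_0 L)^i$ with $|f'c_i| \ge L\min\{\sigma,L^{-\alpha i}\}$ from $(X)_{n,c}$ (applied with $j=i+1$) to bound $\sum_{i=0}^{p}d_i^{-1}(c_0)$ at most exponentially in $p$; the resulting $D_{p+1}(c_0) \ge L^{-\beta - (1+\alpha)p - O(1)}$ together with $x^2 > L^{-1}D_{p+1}(c_0)$ forces $p \ge \log x^{-1}/\log L$.

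For (b), the backbone is shadowing: the Taylor bound $|f\varphi - c_0| \le K_0 L x^2 \le K_0 D_p(c_0)$ together with Lemma~\ref{dist} (the factor $K_0$ absorbed into the distortion constant $K \to 1$) produces $|(f^p)'(f\varphi)| \ge K^{-1}|(f^p)'c_0|$, and thereby the base estimate $|(f^{p+1})'\varphi| \ge K_0^{-1}K^{-1}Lx |(f^p)'c_0|$. For the alternative $L^{\lambda(p+1)/3}$, I feed in $|(f^p)'c_0| \ge L^{\lambda p}$ from $(Y)_{n,c}$, handling the regimes $p$ small versus $p$ large separately: for $p \le (1-\beta/2)/\alpha$, $(X)_{n,c}$ with $j=i+1$ gives $|f'c_i| \ge L\sigma$ for all $i < p$, hence the stronger $|(f^{p+1})'c_0| \ge (L\sigma)^{p+1}$; for $p$ large, the ``last term dominates'' estimate $\sum d_i^{-1}(c_0) \le 2 d_p^{-1}(c_0)$ together with $(Y)_{n,c}$ yields $|(f^{p+1})'\varphi| \ge C_1 L^{(1-\beta)/2 + \lambda(p+1)/2}$. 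For the alternative $x^{-1+7\alpha/\lambda}$, invert $x^2 > L^{-1}D_{p+1}(c_0)$ into $x^{-2} < L^{1+\beta}\sum_{i=0}^p d_i^{-1}(c_0)$ and bound each summand via the identity $d_i^{-1} = |(f^{p+1})'c_0|/(|(f^{p+1-i})'c_i|\cdot|f'c_i|) \le |(f^{p+1})'c_0|/(L\min\{\sigma,L^{-\alpha i}\})^2$ using $(X)_{n,c}$; this gives $|(f^{p+1})'c_0| \ge C_2 x^{-2}\sigma^2 L^{-O(\alpha p)}$, and the tolerance $7\alpha/\lambda$ in the exponent is precisely what absorbs the $L^{-O(\alpha p)}$ deficit when combined with the upper bound $x \le K_0^{1/2}L^{-\beta/2 - \lambda(p-1)/2}$ from (a).

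Finally, (c) is a sharpening in the regime $p \le N$: the parameters in $A^{(N)} \supset A^{(n)}$ keep $c_0,\dots,c_{p-1}$ outside $C_\sigma$, whence $|f'c_i| \ge K_0^{-1}L\sigma = L^{\beta/2}/K_0$ for each $i < p$ and therefore $|(f^p)'c_0| \ge L^{p\beta/2}/K_0^p$, much stronger than the $L^{\lambda p}$ available from $(Y)_{n,c}$. Plugging this and the lower bound on $x$ into the base estimate yields $|(f^{p+1})'\varphi| \ge L^{1 + p(\beta - \lambda)/2}$ up to polynomial factors in $K_0$, and the target $1 + p(\beta - \lambda)/2 \ge \lambda_0(p+1)/3$ follows from $\lambda = \lambda_0/9$ together with $\lambda_0 = 1/2 - \beta/4$ and $\beta \in (3/2, 2)$. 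The main obstacle throughout is the quantitative matching between the accumulating $L^{-O(\alpha p)}$ loss in $\sum d_i^{-1}(c_0)$ and the tolerances $7\alpha/\lambda$ and $\lambda_0/3$ built into the exponents.
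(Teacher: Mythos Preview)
Your overall architecture is right and parts (a) and (c) are sound and close to the paper's argument. The genuine gap is in part (b), in your route to the second alternative $L^{\lambda(p+1)/3}$ for ``large $p$''. The claim $\sum_{i=0}^{p} d_i^{-1}(c_0)\le 2\,d_p^{-1}(c_0)$ does \emph{not} follow from $(Y)_{n,c}$ and is in general false: writing $d_p^{-1}/d_i^{-1}=|(f^{p-i})'c_i|\cdot |f'c_i|/|f'c_p|$, the available lower bounds from $(X)_{n,c}$ give only $d_p^{-1}/d_i^{-1}\ge L(\min\{\sigma,L^{-\alpha i}\})^2/K_0$, which drops below $1$ once $\alpha i$ exceeds roughly $1/2$. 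So the ``last term dominates'' step is unjustified, and your derivation of $|(f^{p+1})'\varphi|\ge C_1 L^{(1-\beta)/2+\lambda(p+1)/2}$ collapses.

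The paper avoids this by not splitting cases at all. It proves a single pivot estimate (its Sublemma~\ref{expand}) $|(f^p)'c_0|\,D_{p+1}(c_0)\ge \mathcal O(1)\,L^{2-\beta-3\alpha p}$ via exactly the identity you use for the other alternative, namely $|(f^p)'c_0|\,d_i=|(f^{p-i})'c_i|\,|f'c_i|\ge (L\min\{\sigma,L^{-\alpha i}\})^2$; combined with $Lx\ge D_{p+1}(c_0)/x$ this gives $|(f^{p+1})'\varphi|\ge \mathcal O(1)\,L^{2-\beta-3\alpha p}\,x^{-1}$, from which \emph{both} alternatives drop out (use the upper bound on $p$ from (a) to convert $L^{-3\alpha p}$ into $x^{6\alpha/\lambda}$, and then $x^{-1}\ge L^{\lambda p/2}$ for the exponential alternative). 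Two smaller points: in your $x^{-1+7\alpha/\lambda}$ argument you should factor through $|(f^p)'c_0|$ (via $|(f^{p-i})'c_i|$) rather than $|(f^{p+1})'c_0|$, since the latter introduces an uncontrolled $|f'c_p|$; and note that once you have (a) and the base estimate, the direct bound $Lx\,|(f^p)'c_0|\ge L\cdot L^{-\lambda p/2}\cdot L^{\lambda p}=L^{1+\lambda p/2}\ge L^{\lambda(p+1)/3}$ already gives the second alternative without any case split.
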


\begin{proof}
We use the notation $\mathcal O(1)$ to denote all constants which
stay bounded and bounded away from zero as $L\to\infty$.

Since $|c_0-f\varphi|\leq K_0L |c-\varphi|^2$, we have
\begin{align*} |c_p-f^{p+1}\varphi|&\leq \mathcal O(1)L
|\theta-\varphi|^2|(f^p)'c_0|\quad\text{by Lemma \ref{dist}}\\
&\leq \mathcal O(1)D_p(c_0)|(f^p)'c_0|\quad \text{since
$\varphi\in I_{-p}(c)\cup I_{p}(c)$}.
\end{align*}
To estimate the right hand side we need
\begin{sublemma}\label{distrem1}
We have $|(f^n)'\theta|D_n(\theta)\leq K_0^2L^{2-\beta}.$
\end{sublemma}
\begin{proof}
We have
\begin{align*}|(f^n)'\theta|d_{n-1}d_{n-1}^{-1}D_n
&\leq L^{-\beta}|(f^n)'\theta|d_{n-1}\\
&= L^{-\beta}|f'(f^{n-1}\theta)|^2\\
&\leq K_0^2L^{2-\beta}.\end{align*}
\end{proof}
Hence we have $|c_p-f^{p+1}\varphi|\leq \mathcal O(1)L^{2-\beta}$.
On the other hand, since $|c_0-f\varphi|\geq K_0^{-1}L
|c-\varphi|^2$ we have
\begin{align*}
|c_p-f^{p+1}\varphi|&\geq
\mathcal O(1)L|c-\varphi|^2|(f^p)'c_0|\\
&\geq \mathcal O(1)L|c-\varphi|^2L^{\lambda p} \quad\text{by $
(Y)_{n,c}$.}
\end{align*}
Putting these two inequalities together and rearranging yields
$|c-\varphi|^2L^{\lambda p} \leq1$, which implies the upper
estimate in (a).

\begin{sublemma}\label{expand}
We have
 $|c_{p}-f^{p+1}\varphi|\geq \mathcal O(1)L^{2-\beta-3\alpha
p}$.\end{sublemma}
\begin{proof} We have
\begin{align*}
|c_{p}-f^{p+1}\varphi|
&\geq \mathcal O(1)|(f^{ p})'c_0|D_{p+1}(c_0)\\
&= \mathcal O(1)L^{-\beta}\left[\sum_{i=0}^{p}|(f^{
p})'c_0|^{-1}d_i^{-1}\right]^{-1}.\end{align*} For the sum in the
square bracket, $|(f^{p})'c_0|d_i\geq L^2
\left(\min\{\sigma,L^{-\alpha i}\}\right)^2$ which follows from
${(X)}_{n,c}$ gives
\begin{align*}
\sum_{i=0}^{p}|(f^{p})'c_0|^{-1}d_i^{-1} &\leq
L^{-2}\sigma^{-2}\cdot\frac{-\log\sigma}{\alpha\log L
}+L^{-2}\sum_{i=0}^{p}L^{2\alpha i}\\
&\leq L^{-2+3\alpha p}.\end{align*} This implies the desired
inequality.
\end{proof}

Sublemma \ref{expand} and $|f'|\leq K_0L$ give
$$L^{-3\alpha p}\leq|c_p-f^{p+1}\varphi|\leq \mathcal O(1)L|c-\varphi|^2(K_0L)^p.$$
Rearranging this yields the lower estimate of $p$ in (a).

We have \begin{align*} |(f^{p+1})'\varphi|&\geq \mathcal
O(1)L|(f^p)'c_0||c-\varphi|\\&\geq \mathcal
O(1)|(f^p)'c_0|D_{p+1}(c_0)|c-\varphi|^{-1}.
\end{align*} Sublemma \ref{expand} gives
$|(f^p)'c_0|D_{p+1}(c_0)|\geq L^{2-\beta-3\alpha p}.$ Hence
$$|(f^{p+1})'\varphi|\geq \mathcal O(1)L^{2-\beta-3\alpha p}
|c-\varphi|^{-1}.$$ Substituting into this the upper estimate of
$p$ in (a) gives
$$
|(f^{p+1})'\varphi|\geq
L^{2-\beta}|c-\varphi|^{-1+\frac{6\alpha}{\lambda}}.$$
Substituting $|c-\varphi|^{-1}\geq L^{\lambda p/2}$ into this
which follows from (a),
$$
|(f^{p+1})'\varphi|\geq
L^{2-\beta}|c-\varphi|^{-1+\frac{6\alpha}{\lambda}} \geq
L^{\lambda(p+1)/3}.$$  The last inequality is a consequence of
$2-\beta>\lambda_0$. This proves (b). A proof of (c) is analogous
to that of (b).
\end{proof}

\subsection{Decompositions into "bound" and "free" segments.} We
introduce some useful language along the way. For $\theta\in S^1$
such that $f^i\theta\notin C$ for all $i\geq1$, let
$$n_1<n_1+p_1+1\leq n_2<n_2+p_2+1\leq\cdots$$
be defined as follows: $n_1$ is the smallest $j\geq0$ such that
$f^j\theta\in C_\delta$. For $k\geq1$, let $p_k$ be the bound
period of $f^{n_k}\theta$, and let $n_{k+1}$ be the smallest
$j\geq n_k+p_k+1$ such that $f^j\theta\in C_\delta$. (Note that an
orbit may return to $C_\delta$ during its bound periods, i.e.
$n_i$ are not the only return times to $C_\delta$.) This
decomposes the orbit of $\theta$ into segments corresponding to
time intervals $(n_k,n_k+p_k]$ and $[n_k+p_k+1,n_{k+1}]$, during
which we describe the orbit of $\theta$ as being "bound" and
"free" states respectively; $n_k$ are called times of {\it free
returns}. For orbits which return to $C_{\delta_0}$ but not to
$C_\delta$, we similarly define bound and free states using
$C_{\delta_0}$ instead of $C_{\delta}$.

The next lemma asserts that no return to $C_{\delta_0}$ occurs
during these bound periods.

\begin{lemma}\label{no}
Let $\theta$ make a free return at time $n$ to $C_{\delta_0}
\setminus C_\delta$, with $p$ the corresponding bound period. Then
$f^{i}\notin C_{\delta_0}$ holds for every $i\in[n+1,n+p+1]$.
\end{lemma}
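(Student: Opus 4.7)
Set $\varphi=f^n\theta$ and let $c\in C$ be the unique critical point with $\varphi\in I_{\pm p}(c)$, so $|\varphi-c|\in[\delta,\delta_0]$. The plan is to compare the orbit $f^{n+1}\theta,\ldots,f^{n+p+1}\theta=f\varphi,\ldots,f^{p+1}\varphi$ against the critical orbit $c_0,c_1,\ldots,c_p$ through the shadowing argument provided by Lemma~\ref{dist}, and then to combine this with~(\ref{mis}) to conclude that each $f^i\theta$ stays at distance greater than $\delta_0$ from $C$.

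The first step is to bound the bound period. Applying Lemma~\ref{reclem1}(a) with $|\varphi-c|\geq\delta=L^{-\alpha N}$ yields $p\leq(2/\lambda)\log_L|\varphi-c|^{-1}\leq 2\alpha N/\lambda$; since the constants are chosen in the order $\sigma,\lambda,\alpha,N,L$ with $\alpha\ll\lambda$, this forces $p\leq N$, so that $d(c_j,C)\geq\sigma$ for every $j\in[0,p]$ by~(\ref{mis}). Next, the quadratic behavior together with $\varphi\in I_p(c)$ gives $|f\varphi-c_0|\leq K_0L|\varphi-c|^2\leq K_0D_p(c_0)$, placing $f\varphi$ in a distortion-controlled neighborhood of $c_0$; applying Lemma~\ref{dist} to $f^j$ yields
\[
|f^{n+j+1}\theta - c_j|\leq KK_0\,D_p(c_0)\,|(f^j)'c_0|\qquad\text{for } j\in[0,p].
\]
For $j\in[0,p-1]$, the term $d_j^{-1}$ appears in the sum defining $D_p(c_0)$, so $D_p(c_0)|(f^j)'c_0|\leq L^{-\beta}|f'(c_j)|\leq K_0L^{1-\beta}$. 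Since $\beta>3/2$ forces $L^{1-\beta}/\sigma=L^{2-3\beta/2}/K_0\to 0$, the shadowing error is much smaller than $\sigma-\delta_0$, and the triangle inequality gives $d(f^{n+j+1}\theta,C)\geq\sigma/2>\delta_0$.

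The endpoint $j=p$, i.e.\ $i=n+p+1$, is the part I expect to be the main obstacle: the only available bound is $D_p(c_0)|(f^p)'c_0|\leq K_0^2L^{2-\beta}$ from Sublemma~\ref{distrem1}, which exceeds $\sigma$ once $\beta<2$, so naive shadowing against $c_p$ no longer suffices. To close this case I plan to combine the matching lower estimate $|f^{p+1}\varphi-c_p|\geq\mathcal{O}(1)L^{2-\beta-3\alpha p}$ from Sublemma~\ref{expand} with the one-sided geometry of $I_p(c)$: because $\Phi''(c)\neq 0$, the image $f(I_p(c))$ lies on a definite side of $c_0$, and iterating with bounded distortion forces $f^{p+1}\varphi$ onto a prescribed side of $c_p$ determined by the sign of $(f^p)'(c_0)$. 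Exploiting the finiteness and isolation of the critical set $C$ together with $p\leq 2\alpha N/\lambda\ll N$, this sign-and-side analysis should localize $f^{n+p+1}\theta$ away from $C$ by more than $\delta_0$.
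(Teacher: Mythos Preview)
Your shadowing argument for $i\in[n+1,n+p]$ is exactly the paper's proof: bound $p\le N$ so that (\ref{mis}) applies to $c_0,\ldots,c_{p}$, use Lemma~\ref{dist} to get $|f^{n+j+1}\theta-c_j|\le KK_0|(f^j)'c_0|D_p(c_0)\le K_0L^{1-\beta}$ for $j\in[0,p-1]$, and conclude by the triangle inequality since $L^{1-\beta}\ll\sigma$ when $\beta>4/3$. (The paper writes the bound as $L^{-\beta}$, which is a minor slip; your $K_0L^{1-\beta}$ is the correct expression, and it suffices.)

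The difficulty you flag at $i=n+p+1$ is real, but your proposed ``sign-and-side'' resolution is not a proof. Knowing that $f^{p+1}\varphi$ lies on a fixed side of $c_p$ and that $|f^{p+1}\varphi-c_p|$ is of order $L^{2-\beta}$ does nothing to keep $f^{p+1}\varphi$ away from $C$: since $L^{2-\beta}\gg\sigma\ge d(c_p,C)$, the point $f^{p+1}\varphi$ can perfectly well fall within $\delta_0$ of some critical point on that side of $c_p$. The finiteness of $C$ and the smallness of $p$ relative to $N$ give no leverage here, because the only information available about $c_p$ is $d(c_p,C)\ge\sigma$.

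In fact the paper's own proof does not treat $i=n+p+1$ separately; it simply writes the shadowing estimate uniformly in $i$. If you examine how the lemma is used in Corollary~\ref{outside}, only the range $i\in[n+1,n+p]$ is ever invoked: once the bound period ends the orbit is free by definition, so an entry to $C_{\delta_0}$ at time $n+p+1$ would already be recorded as the next free return rather than contradicting anything. The endpoint $n+p+1$ in the stated range is therefore either a harmless overstatement or a typo; your argument for $[n+1,n+p]$ is all that is required, and you should drop the attempted patch for the last step rather than pursue it.
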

\begin{proof}
Let $c$ denote the critical point to which $f^{n}\theta$ is bound.
Lemma \ref{dist} gives $|f^{i}\theta-f^{i-n}c|\leq KK_0
|(f^{i-n})'(c_0)| D_{p}(c_0) \leq L^{-\beta}.$ Hence
\begin{align*}
d(f^{i}\theta,C)&\geq  d(f^{i-n}c,C)-|f^{i}\theta-f^{i-n}c|\\
&\geq\sigma-L^{-\beta}\quad\text{by $i-n\leq N$ and (\ref{mis})}\\
&>\delta_0.
\end{align*}
This yields the claim.
\end{proof}

\subsection{Exponential growth outside of critical neighborhoods}
The next corollary asserts an exponential growth of derivatives
outside of $C_\delta$. Our derivation of this consists of two
explicit parts: the obvious exponential growth outside of
$C_{\delta_0}$ for free segments and the recovered expansion in
Lemma \ref{reclem1} for bound segments associated with returns to
$C_{\delta_0}$.

\begin{cor}\label{outside}
For any $a\in A^{(N)}$, $f=f_{a}$ satisfies the following:
\begin{itemize}
\renewcommand{\labelitemi}{$\text{(a)}$}
\item if $n\geq1$ and $\theta$, $f_a\theta,\cdots,
f_a^{n-1}\theta\notin C_\delta$, then
  $|(f_a^{n})'\theta|\geq L^{1-3\lambda}\delta L^{3\lambda n}$;
\renewcommand{\labelitemi}{$\text{(b)}$}
\item if moreover $f_a^n\theta\in C_\delta$,  then $|(f_a^{n})'\theta|
\geq L^{3\lambda n}.$
\end{itemize}
\end{cor}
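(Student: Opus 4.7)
The plan is to decompose the orbit segment $\theta, f\theta, \ldots, f^{n-1}\theta$ into bound and free portions relative to the intermediate neighborhood $C_{\delta_0}$. This is legitimate because the orbit stays outside $C_\delta$ and, by the coverage claim preceding Lemma \ref{reclem1}, every point of $C_{\delta_0}\setminus C_\delta$ lies in some $I_p(c)$ with $p\le N$, so each bound period encountered has length at most $N$ and Lemma \ref{reclem1}(c) is applicable. Let $t_1<t_2<\cdots<t_s$ denote the successive free-return times in $[0,n-1]$, with associated bound periods $q_1,\ldots,q_s$. Lemma \ref{no} ensures the iterates of a bound segment remain outside $C_{\delta_0}$, so the decomposition of $[0,n-1]$ into bound and free pieces is unambiguous.

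For each complete bound segment $[t_k, t_k+q_k]$, Lemma \ref{reclem1}(c) gives $|(f^{q_k+1})'(f^{t_k}\theta)| \ge L^{\lambda_0(q_k+1)/3} = L^{3\lambda(q_k+1)}$. For each free iterate $f^i\theta\notin C_{\delta_0}$, the elementary bound $|f'(f^i\theta)| \ge K_0^{-1}L\delta_0 = K_0^{-1}L^{\lambda_0} \ge L^{3\lambda}$ holds for sufficiently large $L$. Part (b) is the cleaner case: the hypothesis $f^n\theta\in C_\delta$ together with Lemma \ref{no} forces $f^n\theta$ to fall outside every bound segment, so the final bound segment must complete before time $n$ and the terminal iterates $f^{t_s+q_s+1}\theta,\ldots,f^{n-1}\theta$ are free; multiplying the contributions across the disjoint pieces, whose lengths sum to $n$, yields $|(f^n)'\theta|\ge L^{3\lambda n}$.

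The main obstacle lies in part (a), when the orbit is mid-bound-period at time $n-1$, i.e., $t_s+q_s+1>n$. Setting $r = n-t_s\ge 1$, I would split
\[
|(f^r)'(f^{t_s}\theta)| = |f'(f^{t_s}\theta)|\cdot |(f^{r-1})'(f^{t_s+1}\theta)|,
\]
bound the first factor by $K_0^{-1}L\,d(f^{t_s}\theta,C) \ge K_0^{-1}L\delta$ since $f^{t_s}\theta\notin C_\delta$, and use Lemma \ref{dist} (applied on the interval $I_{q_s}(c)$ where $c$ is the critical point to which $f^{t_s}\theta$ is bound) to compare the second factor with $|(f^{r-1})'c_0|$ up to a factor $K^{-1}$. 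Since $a\in A^{(N)}$ ensures $|f'c_i|\ge K_0^{-1}L\sigma \ge L^{\lambda_0}$ for $0\le i\le N-1$, we obtain $|(f^{r-1})'c_0|\ge L^{\lambda_0(r-1)}$. Combining, the partial segment contributes at least of order $L^{1+\lambda_0(r-1)}\delta$, which, using $\lambda_0 = 9\lambda \ge 3\lambda$, dominates $L^{1-3\lambda}\delta\, L^{3\lambda r}$. Multiplying by the $L^{3\lambda(n-r)}$ from the preceding complete segments produces the bound $|(f^n)'\theta|\ge L^{1-3\lambda}\delta\, L^{3\lambda n}$ required in (a).
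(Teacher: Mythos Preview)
Your argument is correct and follows essentially the same route as the paper: decompose the orbit segment relative to $C_{\delta_0}$, use Lemma~\ref{reclem1}(c) on completed bound segments, and use $|f'|\ge K_0^{-1}L\delta_0\ge L^{3\lambda}$ on free iterates. The one place you diverge is in handling the tail after the last free return $t_s$ in part~(a), when the bound period may be incomplete at time $n$: you appeal to Lemma~\ref{dist} to compare $|(f^{r-1})'(f^{t_s+1}\theta)|$ with $|(f^{r-1})'c_0|$ and then invoke~(\ref{mis}). The paper is slightly more economical here: it observes that by Lemma~\ref{no} one has $f^i\theta\notin C_{\delta_0}$ for every $i\in[n_s+1,n-1]$ whether or not the bound period has ended, so the elementary lower bound $|f'(f^i\theta)|\ge L^{\lambda_0}$ (up to $K_0^{-1}$) applies uniformly to the entire tail and no case split or distortion comparison is needed; only the single factor at time $n_s$ picks up $K_0^{-1}L\delta$.
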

\begin{proof}
If the orbit of $\theta$ makes no return to $C_{\delta_0}$ in the
first $n-1$ iterates, then the assertions are obvious. Otherwise,
let $0<n_1<\cdots<n_s\leq n-1$ denote the sequence of all free
returns to $C_{\delta_0}$ in the first $n-1$ iterates of $\theta$,
with $p_1,\cdots,p_s$ the corresponding bound periods. We have
\begin{align*}
|(f^{n_s-n_1})'f^{n_1}\theta|=\prod_{1\leq k\leq
s-1}|(f^{n_{k+1}-n_k-p_k-1})'f^{n_k+p_k+1}
\theta||(f^{p_k+1})'f^{n_k} \theta|.
\end{align*}
For each bound segment, (c) in Lemma \ref{reclem1} gives
$|(f^{p_{k}+1})'f^{n_k} \theta|\geq L^{ (p_{k}+1)\lambda_0/3}.$
For each free segment, we clearly have
$|(f^{n_{k+1}-n_k-p_k-1})'f^{n_k+p_k+1} \theta|\geq
L^{\lambda_0(n_{k+1}-n_k-p_k-1)}$. Hence
\begin{align*}
|(f^{n_s-n_1})'f^{n_1}\theta|&\geq\prod_{1\leq k\leq
s-1}L^{\lambda(n_{k+1}-n_k)}\geq L^{\frac{\lambda_0}{3}(n_s-n_1)}.
\end{align*}
Since $|(f^{n_1})'\theta|\geq L^{\lambda_0 n_1}$ we obtain
$|(f^{n_s})'\theta|\geq L^{\frac{\lambda_0 n_s}{3}}.$

For the remaining factor, by Lemma \ref{no} we have
$f^i\theta\notin C_{\delta_0}$ for $n_s+1\leq i\leq n-1$. This and
$d(f^{n_s}\theta,C)\geq\delta$ gives
$$|(f^{n-n_s})'f^{n_s}\theta|\geq K_0^{-1}L\delta
L^{\lambda_0(n-n_s-1)}\geq\delta L^{\lambda_0(n-n_s)}\times
L^{1-\lambda}.$$ Consequently we obtain (a). If $f^n\in C_\delta$,
then Lemma \ref{no} gives $n\geq n_s+p_s+1$, and thus
$|(f^{n-n_s})'f^{n_s}\theta|\geq L^{\frac{\lambda_0}{3}(n-n_s)}.$
This proves (b).
\end{proof}

\section{Recovering inductive assumptions}
We have already defined the sets $A^{(0)},\cdots, A^{(N)}$ and
estimated their measure. At step $n\geq N$, we exclude from
$A^{(n)}$ all parameters for which $X_{n+1}$ or $Y_{n+1}$ may
fail. We introduce condition $W_n$ which determines a rule of
exclusion. Parameters have to satisfy this condition to be
selected. In other words, for $n\geq N$ we define
$$A^{(n+1)}=\{a\in A^{(n)}\colon W_n\text{ holds}\}.$$

\subsection{Condition ${W}_{n}$}
Let $f_a$ satisfy ${X}_{n}$, ${Y}_{n}$. We say $f_a$ satisfies
${W}_{n,c}$ if
\begin{equation}\label{br}\sum_{\stackrel{i\leq k}{\text{free return}}}-\log d
(c_i(a),C)\leq\frac{\alpha k\log L}{3}\text{ for every
}k\in[0,n].\end{equation} The sum runs over all $i$ at which the
orbit of $c_0$ makes a free return to $C_\delta$. We say $f_a$
satisfies $(W)_n$ if it satisfies $(W)_{n,c}$ for every $c\in C$.

The next proposition implies that the assumptions of the induction
are recovered for parameters in $A^{(n+1)}$.
\begin{prop}\label{brprop}
Let $f_a$ satisfy ${W}_{n,c}$. Then ${X}_{n+1,c}$ and ${
Y}_{n+1,c}$ hold for $f_a$.
\end{prop}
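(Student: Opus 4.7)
The plan is to upgrade the inductive hypotheses $(X)_{n,c}$ and $(Y)_{n,c}$ to step $n+1$ by decomposing the orbit $c_0,c_1,\ldots,c_{n+1}$ into alternating \emph{free} and \emph{bound} segments at its returns to $C_\delta$: on free segments I apply Corollary \ref{outside}, on bound segments I apply Lemma \ref{reclem1}, and condition $W_{n,c}$ is exactly what keeps the aggregate bound time negligible compared to the free time.

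First I would label the free returns $0<n_1<\cdots<n_s\leq n$ of $c_0$ to $C_\delta$ and their bound periods $p_1,\ldots,p_s$. Combining the bound $p_k\leq (2/\lambda)(-\log d(c_{n_k},C))/\log L$ from Lemma \ref{reclem1}(a) with $\sum_k -\log d(c_{n_k},C)\leq \alpha n\log L/3$ from $W_{n,c}$ gives a total bound length $B=\sum_k(p_k+1)$ that is a small fraction of $n$, since $\alpha\ll\lambda$ and each $p_k\geq\alpha N$ also keeps the number $s$ of free returns small. To verify $(Y)_{n+1,c}$ one writes $|(f^{n+1})'c_0|$ as a product of one factor per sub-interval: each bound sub-interval yields $\geq L^{\lambda(p_k+1)/3}$ by Lemma \ref{reclem1}(b), and each free sub-interval yields $\geq L^{3\lambda\ell}$ by Corollary \ref{outside} (possibly with a harmless $\delta L^{1-3\lambda}$ prefactor on the tail). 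Since free time dominates bound time and the free rate $3\lambda$ exceeds the bound rate $\lambda/3$, the resulting exponent comfortably exceeds $\lambda(n+1)$.

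For $(X)_{n+1,c}$ it suffices to prove the new family of inequalities $|(f^{n+1-i})'c_i|\geq L\min\{\sigma,L^{-\alpha i}\}$ for $0\leq i\leq n$. I would split according to the position of $c_i$: (a) if $c_i\notin C_\delta$, the orbit of $c_i$ admits the same free/bound decomposition on $[i,n]$ and the analogous estimate gives far more than the required lower bound (noting $\delta\leq L^{-\alpha i}$ when $i\geq N$ and using $(\ref{mis})$ for $i\leq N$); (b) if $c_i\in C_\delta$ and $n+1-i$ exceeds the corresponding bound period, Lemma \ref{reclem1}(b) applied at $c_i$ followed by Corollary \ref{outside} does the job; (c) if the bound period overshoots $n+1$, Lemma \ref{dist} shadows the orbit of $c_i$ by the orbit of the nearby critical point's image, to which $(Y)_{n,c}$ applies, and the resulting lower bound still beats $L\min\{\sigma,L^{-\alpha i}\}$ because $W_{n,c}$ forces $d(c_i,C)\geq L^{-\alpha n/3}$, well above the required tolerance.

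The main obstacle will be case (c), where the single-step derivative $|f'(c_i)|\approx L\,d(c_i,C)$ can be extremely small and the expansion has to be extracted from the distortion-controlled comparison with the shadowed critical orbit rather than from a single clean estimate. The essential structural remark is that the threshold in $(X)$ depends only on the starting index $i$, so any derivative loss incurred at $c_i$ is absorbed by the accumulated expansion guaranteed by $(Y)_{n,c}$ on the shadowed orbit, provided $d(c_i,C)$ is not smaller than the $W_{n,c}$-imposed floor; once this shadowing step is set up, everything else is routine bookkeeping with the rates $3\lambda$ and $\lambda/3$ against the bound on $B$ established above.
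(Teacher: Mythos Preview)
Your overall strategy matches the paper's: decompose into free and bound segments, apply Corollary~\ref{outside} on the free pieces and Lemma~\ref{reclem1} on the bound pieces, and use $W_{n,c}$ together with Lemma~\ref{reclem1}(a) to show $\sum_k p_k$ is a negligible fraction of $n$. Your treatment of $(Y)_{n+1,c}$ is essentially the paper's, though you omit the case $n_s+p_s\ge n$ in which the last bound period overshoots $n+1$; the paper treats it separately via the crude bound $|f'|\le K_0L$.

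There is a genuine gap in your case~(c) for $(X)_{n+1,c}$. Your trichotomy is organized by whether $c_i\in C_\delta$, but as the paper notes explicitly, the orbit of $c_0$ can return to $C_\delta$ \emph{during} a bound period: the free-return times $n_1,\dots,n_s$ are not the only indices with $c_i\in C_\delta$. For such a bound return your key assertion ``$W_{n,c}$ forces $d(c_i,C)\ge L^{-\alpha n/3}$'' is false, since $W_{n,c}$ sums only over \emph{free} returns and gives no direct lower bound on the depth of a bound return. The shadowing estimate you propose in~(c) therefore cannot be closed as written. A parallel issue lurks in case~(a): the fresh free/bound decomposition you launch from $c_i$ may first hit $C_\delta$ at what is a bound return of the original orbit, whose depth is again not governed by $W_{n,c}$, so the total bound time of that fresh decomposition is not obviously controlled.

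The paper avoids this by splitting according to the position of the \emph{indices} $i$ and $n+1$ relative to the bound intervals $[n_k+1,n_k+p_k]$ of $c_0$'s original decomposition (four cases), rather than by whether $c_i\in C_\delta$. When $i$ lands in such an interval one does not try to bound $d(c_i,C)$ at all; instead Lemma~\ref{dist} gives $|(f^{n+1-i})'c_i|\ge K^{-1}|(f^{n+1-i})'\tilde c_{\,i-n_k-1}|$, where $\tilde c$ is the critical point to which $c_{n_k}$ (not $c_i$) is bound, and one invokes $(X)_{n,\tilde c}$ directly on the right-hand side. The delicate cases~(iii) and~(iv) require a further subdivision according to whether $\sigma\ge L^{-\alpha(i-n_k-1)}$.
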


\begin{proof}
We begin with the particular case in which $c_0$ makes no return
to $C_\delta$ in the first $n$ iterates. (\ref{mis}) gives
$\delta|(f^{N})'c_0|\geq \delta(K_0^{-1}L\sigma)^{N}\geq
L^{3\lambda N}.$ Hence
\begin{align*}
|(f^{n+1})'c_0|&=|(f^{n+1-N})'c_{N}||(f^{N})'c_0|\\
&\geq \delta L^{3\lambda(n+1-N)}|(f^{N})'c_0|
\quad\text{ by Corollary \ref{outside}}\\
&\geq L^{3\lambda(n+1)},
\end{align*}
which proves ${Y}_{n+1,c}$.

To check $X_{n+1,c}$ we only need to consider inequalities which
are not covered by $X_{n,c}$. They are:
\begin{equation}\label{ba}
|(f^{n+1-i})'c_i|\geq L\cdot\min\{\sigma,L^{-\alpha i}\}
\quad\text{ for }0\leq i\leq n.\end{equation}

Suppose that $i\geq N$. Corollary \ref{outside} and the definition
of $\delta$ in (\ref{delta0}) give
\begin{align*}|(f^{n+1-i})'c_i|
\geq L\delta \geq LL^{-\alpha i}.\end{align*} Suppose that $i<N$.
Since $c_i,c_{i+1},\cdots,c_{i+N}\notin C_\delta$, Corollary
\ref{outside} gives $|(f^{N})'c_i|\geq \delta L^{\lambda N}$ and
$|(f^{n+1-i-N})'c_{N+i}|\geq L\delta$. Therefore
\begin{align*}
|(f^{n+1-i})'c_i|=|(f^{n+1-i-N})'c_{N+i}||(f^{N})'c_i|\geq L,
\end{align*}
where the last inequality follows from the definition of $\delta$
and $\alpha\ll\lambda$.

Proceeding to the general case, let $0<n_1<\cdots<n_s\leq n$
denote all the free returns to $C_\delta$ in the first
$n$-iterates of $c_0$, with $p_1,\cdots,p_s$ the corresponding
bound periods. Using Lemma \ref{reclem1} and then $W_{n,c}$,
\begin{equation}\label{p}\sum_{k=1}^{s}p_k\leq
\frac{2}{\lambda\log L}\sum_{k=1}^{s}-\log d(c_{n_k},C)\leq
\frac{2\alpha n }{3\lambda\log L}.\end{equation} The chain rule
gives
\begin{align*}
|(f^{n_{s}+p_{s}+1})'c_0|&=|(f^{n_1})'c_0|\cdot\prod_{k=1}^{s-1}
|(f^{n_{k+1}-n_{k}-p_{k}-1})'c_{n_{k}+p_{k}+1} |\prod_{k=1}^{s}
|(f^{p_{k}+1})'c_{n_{k}}|.\end{align*} For the first term,
$\lambda\gg\alpha$ and $n_1\geq N$ give $L^{\lambda
n_1}\geq\delta^{-1}$. This and Corollary \ref{outside} give
$|(f^{n_1})'c_0|\geq L^{\lambda n_1}L^{2\lambda n_1}\geq
\delta^{-1}L^{2\lambda n_1}.$ Using Lemma \ref{reclem1} for each
term in the products, we obtain
\begin{equation}\label{breq1}
|(f^{n_{s}+p_{s}+1})'c_0|\geq
\delta^{-1}L^{2\lambda(n_{s}+p_{s}+1-\sum_{k=1}^{s} p_k)}.
\end{equation}

The rest of the argument splits into two cases. First, suppose
that $n_s+p_s\geq n$. Using $|f'|\leq K_0L$ and
$p_s\leq\frac{2\alpha n}{3\lambda\log L}$ in (\ref{p}) we have
$$\frac{|(f^{n_{s}+p_{s}+1})'c_0|}{|(f^{n+1})'c_0|}\leq (K_0L)^{p_s}
\leq L^{\alpha n}.$$ Hence
\begin{equation}\label{breq2}|(f^{n+1})'c_0|\geq L^{2\lambda(n_{s}+p_{s}+1-\sum_{k=1}^{s}
p_k)} \geq L^{2\lambda(n+1-\sum_{k=1}^{s} p_k)}.\end{equation}

Next, suppose that $n_s+p_s< n$. Since $n_s$ is the last free
return, Corollary \ref{outside} gives
$|(f^{n+1-n_{s}-p_{s}-1})'c_{n_{s}+p_{s}+1}|\geq K_0^{-1}\delta
L^{3\lambda(n-n_{s}-p_{s})}.$ Combining this with (\ref{breq1})
gives \begin{equation} \label{breq3} |(f^{n+1})'c_0|\geq
L^{2\lambda(n+1-\sum_{k=1}^{s} p_k)},\end{equation} yielding the
same inequality as in the previous case. In either of these two
cases, substituting the upper estimate of the sum of the bound
periods in (\ref{p}) into the exponent of (\ref{breq3}) yields
the desired inequality.\\

\noindent{\it Proof of ${X}_{n+1,c}$.} We deal with four cases
separately.

Case (i): $i,n+1\notin \cup_{1\leq k\leq s} [n_k+1,n_k+p_k]$. We
have
 \begin{align*}|(f^{n+1-i})'\theta_i|&\geq
\delta L^{\lambda(n+1-i)}\quad\text{by Corollary \ref{outside} and
Lemma \ref{reclem1} }\\&\geq \delta L^{\alpha(n+1)-\alpha
i}\quad\text{since $\alpha<\lambda$}\\
&\geq L^{1-\alpha i}\quad\text{since $\delta L^{\alpha(n+1)}\geq
L\delta L^{\alpha N}\geq L$}.\end{align*}

Case (ii): $i\notin \cup_{1\leq k\leq s} [n_k+1,n_k+p_k]$ and
$n+1\in[n_s+1,n_s+p_s]$. Let $\tilde c$ denote the critical point
to which $c_{n_s}$ is bound. We have:
\begin{align*}
&d(c_{n_s},C)\geq L^{-\frac{\alpha n}{3}}\quad\text{by
$W_{n,c}$,}\\
&|(f^{n-n_s})'c_{n_s+1}|\geq K^{-1}L^{\lambda(n-n_s)}
\quad\text{by Lemma \ref{dist} and $Y_{n,\tilde c}$,}\\ &
|(f^{n_s-i})'c_{i}|\geq L^{\frac{\lambda}{3}(n_s-i)}
\quad\text{since $i$, $n_s$ are free and $n_s$ is a return.}
\end{align*}
Combining these altogether gives
\begin{align*}|(f^{n+1-i})'c_i|&\geq
|(f^{n-n_s})'c_{n_s+1}|K_0^{-1}Ld(c_{n_s},C)|(f^{n_s-i})'c_{i}|\\&
\geq K_0^{-1}K^{-1}L^{\frac{\lambda}{3}(n-i)+1}L^{-\alpha
n/3}\\&\geq
K_0^{-1}K^{-1}L^{\alpha(n-i)/3+1}L^{-\alpha n/3}\\
&\geq L^{1-\alpha i/2},
\end{align*}
where the last inequality holds because $i\geq1$ and $K\to1$ as
$L\to\infty$.

Case (iii): $i,n+1\in[n_k+1,n_k+p_k]$ for some $k\in[1,s]$. Let
$\tilde c$ denote the critical point to which $c_{n_k}$ is bound.
If $\sigma\geq L^{-\alpha (i-n_k-1)}$, then
\begin{align*}
|(f^{n+1-i})'c_i|&\geq K^{-1}|(f^{n+1-i})'
\tilde c_{i-n_k-1}|\quad\text{by Lemma \ref{dist}}\\
&\geq K^{-1}L\cdot L^{-\alpha (i-n_k-1)}\quad\text{by
$X_{n,\tilde c}$}\\
&\geq L\cdot L^{-\alpha i}.\end{align*}

Suppose that $\sigma< L^{-\alpha (i-n_k-1)}$. Then the definition
of $\sigma$ in (\ref{sigma}) gives $i-n_k\leq 2\alpha^{-1}$. If
$n-i\leq \alpha N$, then the bound period $p_k$ for $n_k$ remains
in effect at time $n$, because all bound periods are $\geq\alpha
N$ (Lemma \ref{reclem1}). Hence
\begin{align*}|(f^{n+1-i})'c_i|&\geq
K^{-1}|(f^{n+1-i})' \tilde c_{i-n_k-1}|\\
&\geq 1\quad\text{by (\ref{mis}).}\end{align*} It is left to
consider the subcase $n-i\geq \alpha N$. The proof of (ii) and
$n_k<i$ imply
$$|(f^{n+1-n_k})'c_{n_k}|\geq
L^{\frac{\lambda}{3}(n-n_k)}L^{1-\alpha n_k/2}\geq
L^{\frac{\lambda}{3}(n-i)} L^{1-\alpha i/2}.$$ Combining this with
$|(f^{i-n_k})'c_{n_k}|\leq(K_0L)^{-2\alpha^{-1}}$ we obtain
\begin{align*}
|(f^{n+1-i})'c_{i}|&=|(f^{n+1-n_k})'c_{n_k}|
|(f^{i-n_k})'c_{n_k}|^{-1}\\
&\geq L^{\frac{\lambda N}{3}} L^{1-\alpha i/2}
(K_0L)^{-2\alpha^{-1}}\\
&\geq L^{1-\alpha i/2}.
\end{align*}

Case (iv): $i\in[n_k+1,n_k+p_k]$ for some $k\in[1,s]$. We may
assume $n_k+p_k<n+1$, for otherwise the case is covered by (iii).
We have
\begin{align*} |(f^{n-n_k-p_k})'c_{n_k+p_k+1}|&\geq
L^{1-\alpha(n_k+p_k+1)/2}\quad\text{by (ii)}\\
&\geq L^{1-2\alpha n_k/3}\quad\text{by $p_k\leq\alpha n_k$ from
$W_{n,c}$}.
\end{align*}

To estimate the remaining factor, we consider two cases separately
as before. Suppose that $\sigma< L^{-\alpha(i-n_k-1)}$, then
$i-n_k\leq 2\alpha^{-1}$.
\begin{align*}|(f^{n_k+p_k+1-i})'c_i|&=
|(f^{p_k+1})'c_{n_k}||(f^{i-n_k})'c_{n_k}|^{-1}\quad\text{by Lemma \ref{dist}}\\
&\geq L^{\lambda p_k}(K_0L)^{-2\alpha^{-1}}\\
&\geq1\quad\text{since $p_k\geq\alpha N$.}\end{align*} Combining
this with the previous inequality implies the desired one.

Suppose that $\sigma\geq L^{-\alpha(i-n_k-1)}$. Let $\tilde c$
denote the critical point to which $c_{n_k}$ is bound. Then
\begin{align*}|(f^{n_k+p_k+1-i})'c_i|&\geq
K^{-1}|(f^{n_k+p_k+1-i})'\tilde c_{i-n_k-1}|\quad\text{by Lemma \ref{dist}}\\
&\geq K^{-1}L^{1-\alpha(i-n_k-1)}\quad\text{by $X_{n,\tilde
c}$}\end{align*} Combining these two inequalities,
$$|(f^{n+1-i})'c_i|\geq L^{2-\alpha i+\alpha+\alpha n_k/3}
\geq L^{1-\alpha i}.$$ This completes the proof of Proposition
\ref{brprop}.
\end{proof}

\section{Parameter exclusion: general steps}
Set $A^{(\infty)}=\bigcap_{n\geq0}A^{(n)}$. In this last section
we prove

\begin{prop}\label{measure}
For all large integer $N$ there exists $L_0$ such that for all
$L\geq L_0$,
$$ \left|A _L^{(\infty)}\right|
\geq \left(1-L^{-\frac{\alpha
N}{10}}\right)\left(1-{^3\sqrt{\sigma}}\right)^N.$$
\end{prop}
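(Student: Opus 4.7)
The plan is to bound $|A^{(N)}\setminus A^{(\infty)}|$ from above by summing the per-step losses $E_n:=A^{(n)}\setminus A^{(n+1)}$ for $n\geq N$. Combining with $|A^{(N)}|\geq(1-\sqrt[3]{\sigma})^{N+1}$ from Proposition~\ref{initial}, it suffices to show $\sum_{n\geq N}|E_n|\leq L^{-\alpha N/10}|A^{(N)}|$, whence the stated bound follows (the discrepancy between exponents $N$ and $N+1$ on $(1-\sqrt[3]{\sigma})$ being absorbed in the leading constants for large $L$).

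As a first step, for each $c\in C$ and $n\geq N$, I would extend the partition construction of Section~3.2 to obtain a finite partition $\mathcal{P}_n(c)$ of $A^{(n)}$ into intervals $I=\Delta_n(a,c)$ satisfying: (i) the combinatorial itinerary of the orbit $c_0(\cdot),\dots,c_n(\cdot)$, in particular the free-return times $n_1<\cdots<n_s\leq n$, is constant on $I$; (ii) $a\mapsto c_n(a)$ has distortion bounded by $K'$, by Lemma~\ref{samp}; (iii) by the amendment of Section~3.2 together with Sublemma~\ref{distrem1}, for $L$ large one has $|c_n(I)|\in[\tfrac{1}{9},\tfrac{1}{3}]$. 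By Proposition~\ref{brprop}, one then has $E_n=\bigcup_{c\in C}\{a\in A^{(n)}:S_{n,c}(a)>\tfrac{\alpha n}{3}\log L\}$, where $S_{n,c}(a):=\sum_{i\leq n,\text{ f.r.\ for }c}-\log d(c_i(a),C)$.

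Next, exponential Markov with a parameter $\eta\in(0,1)$ gives
\[
|E_n|\;\leq\;|C|\,L^{-\alpha n\eta/3}\max_{c\in C}\int_{A^{(n)}}e^{\eta S_{n,c}(a)}\,da,
\]
reducing the problem to establishing $\int_{A^{(n)}}e^{\eta S_{n,c}}\,da\leq C_\eta^n|A^{(n)}|$ for a constant $C_\eta$ independent of $n$ and $L$. This is the technical heart, obtained by iterated integration along the nested tower $\mathcal{P}_{n_1}(c)\supset\cdots\supset\mathcal{P}_{n_s}(c)$: on each $I\in\mathcal{P}_{n_k}(c)$, a change of variables with Lemma~\ref{samp} and $|c_{n_k}(I)|\geq\tfrac{1}{9}$ gives the one-factor bound
\[
\int_I d(c_{n_k}(a),C)^{-\eta}\,da\;\leq\;\frac{K'|I|}{|c_{n_k}(I)|}\int_{c_{n_k}(I)}d(x,C)^{-\eta}\,dx\;\leq\;C_\eta|I|,
\]
where finiteness of $\int_{S^1}d(x,C)^{-\eta}\,dx$ for $\eta<1$ uses that $\Phi$ is Morse. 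For $j<k$ one has $|c_{n_j}(I)|\leq L^{-\lambda(n_k-n_j)}|c_{n_k}(I)|$ by distortion and Lemma~\ref{trans} together with $(Y)_{n,c}$, so the earlier factors $d(c_{n_j}(\cdot),C)^{-\eta}$ are essentially constant on pieces of $\mathcal{P}_{n_k}(c)$ (except in the degenerate case of two free returns so close in time that the intermediate contribution already violates $W_n$, a situation absorbed by the constraint itself). Recursing from $k=s$ down to $k=1$ yields the claimed bound.

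Fixing $\eta=\tfrac{1}{2}$, this gives $|E_n|\leq|C|(C_\eta L^{-\alpha/6})^n|A^{(n)}|\leq L^{-\alpha n/7}|A^{(n)}|$ for $L\geq L_0(N)$, and summing the geometric series yields $\sum_{n\geq N}|E_n|\leq L^{-\alpha N/10}|A^{(N)}|$. The main obstacle is the iterated integration step: one must verify the quasi-independence of the factors $d(c_{n_j}(\cdot),C)^{-\eta}$ along the tower of partitions uniformly in $n$, including a careful treatment of close successive returns, and ensure that the amendment of Section~3.2 provides $|c_n(I)|\asymp\tfrac{1}{9}$ uniformly in the effective regime. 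Both points are standard within the Benedicks--Carleson--Tsujii framework but require care to keep constants independent of $n$.
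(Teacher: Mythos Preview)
Your overall strategy---bound $|A^{(n)}\setminus A^{(n+1)}|$ by a large-deviation estimate on the free-return depth sum $S_{n,c}$ and sum over $n\geq N$---matches the paper's. But the implementation diverges at the key technical point, and your version contains a genuine gap.

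You assert that the amended intervals $I=\Delta_n(a,c)$ satisfy $|c_n(I)|\in[\tfrac19,\tfrac13]$, citing Sublemma~\ref{distrem1} for the lower bound. This is not justified: Sublemma~\ref{distrem1} gives only an \emph{upper} bound $|(f^n)'c_0|\,D_n\leq K_0^2L^{2-\beta}$. The lower bound $|c_n(\hat\Delta_n)|\geq 1$ (hence $|c_n(\Delta_n)|\asymp\tfrac19$ after amendment) is Lemma~\ref{wrap0}, which is proved only for $n\leq N$ using the Misiurewicz-type condition~(\ref{mis}). For $n>N$ no such wrapping holds in general, and the image $c_{n_k}(I)$ at a free return $n_k$ can be far smaller than any fixed constant. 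Your one-factor bound $\int_I d(c_{n_k}(a),C)^{-\eta}\,da\leq C_\eta|I|$ with $C_\eta$ independent of $n,L$ then fails, and with it the iterated integration.

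The paper's substitute for wrapping is Lemma~\ref{expansion}: for an \emph{essential} return $\nu$ one has $|(f^\nu)'c_0|\,D_\nu\geq\sqrt{d(c_\nu,C)}$, i.e.\ the parameter image has length at least $\sqrt{d(c_\nu,C)}$ rather than a fixed constant. This weaker, depth-dependent expansion is what forces the paper's machinery: the notion of essential returns (so that inessential depths are dominated by essential ones), the decomposition of $A^{(n)}\setminus A^{(n+1)}$ by itinerary data $(\mathcal X,\mathcal Y,R)$, and the hierarchical covering that converts Lemma~\ref{expansion} into the bound $|B^R_{\mathcal X,\mathcal Y}(c)|\leq e^{-(R+q)/5}$. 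A Stirling count of admissible $(\mathcal X,\mathcal Y)$ then closes the argument. Your exponential-moment scheme could in principle be made to work, but not with the uniform image-size claim you rely on; you would need to replace it by an expansion estimate of the type in Lemma~\ref{expansion} and correspondingly restrict to essential returns.
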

We prove this by combining the estimate of $|A^{(N)}|$ in
Proposition \ref{initial} with a new estimate which shows that the
measure of $A^{(n)}\setminus A^{(n+1)}$ relative to $|A^{(N)}|$
decreases exponentially in $n$.

\subsection{Expansion in parameter space}
We begin with a main technical estimate. Let $c\in C$, $a\in
A^{(n)}$ and suppose that $c_0(a)$ makes a free return to
$C_\delta$ at time $\nu\leq n+1$. We say $\nu$ is an {\it
essential return time} if
\begin{equation}\label{inessential}
\sum_{\stackrel{i+1\leq j\leq\nu}{\text{free return}}}2\log
d(c_{j}(a),C)\leq \log d(c_{i}(a),C)\text{ for every free return
}i\leq[0,\nu-1].
\end{equation}
The sum ranges over all $j\in[i+1,\nu]$ at which $c_0(a)$ makes a
free return to $C_\delta$.

\begin{lemma}\label{expansion}
Let $a\in A^{(n)}$ and $c\in C$. For every essential return $\nu$
in the first $n+1$ iterates of $c_0(a)$ we have
$$D_\nu(a,c_0(a))\cdot|(f_a^{\nu})'c_0(a)|\geq
\sqrt{d(c_\nu,C)}.$$ In particular, for all $b\in
\Delta_{\nu}(a,c)-
{^{5}\sqrt{d(c_\nu(a),C)}}\cdot\Delta_{\nu}(a,c)$ we have
$$|c_\nu(a)-c_\nu(b)| \geq
{^4\sqrt{d(c_\nu(a),C)}}.$$
\end{lemma}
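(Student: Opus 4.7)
The plan is to reduce the main inequality to a sum bound and control contributions via the orbit's bound--free structure together with the essential return condition.

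First I would unwind the definitions: using $d_i^{-1}/|(f_a^\nu)'c_0(a)|=1/(|f_a'c_i|\,|(f_a^{\nu-i})'c_i|)$, the first assertion is equivalent to
$$\sum_{i=0}^{\nu-1}\frac{1}{|f_a'c_i|\,|(f_a^{\nu-i})'c_i|}\leq \frac{L^{-\beta}}{\sqrt{d(c_\nu,C)}}.$$
Write $0<n_1<\cdots<n_s=\nu$ for the free returns of $c_0(a)$ to $C_\delta$, $p_k$ for the bound periods, and $R_k=d(c_{n_k},C)\leq\delta$. I would then partition $[0,\nu-1]$ into the return times $\{n_k\}_{k<s}$, the bound segments $(n_k,n_k+p_k]$, and the free segments, and estimate each class. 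Contributions from free-segment indices use $|f_a'c_i|\geq K_0^{-1}L\delta$ and the exponential expansion of $|(f_a^{\nu-i})'c_i|$ from Corollary \ref{outside}, giving a geometric sum of order $L^{-\beta}$; bound-segment contributions are handled analogously after using Lemma \ref{dist} to replace $c_i$ by the shadowed critical orbit and invoking Lemma \ref{reclem1}.

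The main contributions come from $i=n_k$. Here $|f_a'c_{n_k}|\geq K_0^{-1}LR_k$ is small; chaining Lemma \ref{reclem1}(b) (which gives $|(f_a^{p_{k'}+1})'c_{n_{k'}}|\geq R_{k'}^{-1+7\alpha/\lambda}$) across the remaining returns together with Corollary \ref{outside} for the intervening free iterates yields
$$|(f_a^{\nu-n_k})'c_{n_k}|\geq\prod_{k'=k}^{s-1}R_{k'}^{-1+7\alpha/\lambda}\cdot L^{3\lambda F_k},$$
where $F_k$ is the number of free iterates in $[n_k,\nu]$. Now the essential return condition $R_k\geq(\prod_{j=k+1}^s R_j)^2$ enters: it gives $R_k^{-7\alpha/\lambda}\leq(\prod_{j>k}R_j)^{-14\alpha/\lambda}$, so combined with $R_j^{1-7\alpha/\lambda}\leq1$ for $\alpha\ll\lambda$, each return contribution is bounded by $K_0 L^{-1}R_s^{-14\alpha/\lambda}$. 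Summing over $s\leq\nu/(\alpha N)$ returns (using $p_k\geq\alpha N$ from Lemma \ref{reclem1}(a) together with $R_k\leq\delta=L^{-\alpha N}$) then gives total $\leq L^{-\beta}R_s^{-1/2}$ for $N$ large enough that $\alpha N(1/2-14\alpha/\lambda)\geq\beta-1$, establishing the first claim.

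For the ``In particular'' statement, Lemma \ref{samp} and Lemma \ref{trans} combine to yield
$|c_\nu(b)-c_\nu(a)|\geq(K')^{-1}(1-L^{-\lambda/2})|(f_a^\nu)'c_0(a)|\,|b-a|$
for $b\in\Delta_\nu(a,c)$; moreover, for $b\notin\sqrt[5]{d(c_\nu,C)}\,\Delta_\nu(a,c)$ one has $|b-a|\geq\frac{1}{2}\sqrt[5]{d(c_\nu,C)}\,|\Delta_\nu(a,c)|$. Combining with the first assertion and separating the two cases in the definition of $\Delta_\nu$ (non-shrunken with $|\Delta_\nu|=2D_\nu$ versus shrunken with $|c_\nu(\Delta_\nu)|\approx 1/9$), the displacement bound $\geq\sqrt[4]{d(c_\nu,C)}$ follows for large $L,N$ by exploiting the smallness $d(c_\nu,C)\leq\delta=L^{-\alpha N}$. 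The hard part will be the delicate balance in the return-contribution estimate: the loss $R_k^{-1}$ at each free return must be offset by the bound-period gain $R_k^{-1+7\alpha/\lambda}$, with the cumulative product over subsequent returns tamed by the essential condition so that the sum stays within the target even when many returns occur.
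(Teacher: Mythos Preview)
Your overall strategy---rewriting the target inequality as a sum over $i$, decomposing into free, bound, and return indices, and using the essential return condition for the return terms---matches the paper's proof. However, there is a genuine gap in your summation of the return-time contributions.

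After chaining Lemma~\ref{reclem1}(b) and Corollary~\ref{outside}, the contribution at $i=n_k$ is (dropping the free-expansion factor) bounded by
\[
\frac{K_0}{L}\,R_k^{-7\alpha/\lambda}\prod_{k'=k+1}^{s-1}R_{k'}^{\,1-7\alpha/\lambda}.
\]
Applying the essential condition $R_k^{-7\alpha/\lambda}\leq\bigl(\prod_{j>k}R_j\bigr)^{-14\alpha/\lambda}$ turns this into
\[
\frac{K_0}{L}\,R_s^{-14\alpha/\lambda}\prod_{j=k+1}^{s-1}R_j^{\,1-21\alpha/\lambda}.
\]
At this point you discard the product (your ``$R_j^{1-7\alpha/\lambda}\leq1$''; the exponent should be $1-21\alpha/\lambda$, but that is harmless) and obtain the uniform per-return bound $K_0L^{-1}R_s^{-14\alpha/\lambda}$. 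But then summing over the returns contributes a factor $s\leq\nu/(\alpha N)$, and $\nu$ is \emph{unbounded}: it can be as large as $n+1$. Your condition $\alpha N(1/2-14\alpha/\lambda)\geq\beta-1$ only makes $L^{1-\beta}R_s^{-1/2+14\alpha/\lambda}\geq1$, not $\geq sK_0$, so the claimed bound $\leq L^{-\beta}R_s^{-1/2}$ does not follow.

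The remedy---and this is precisely what the paper does---is to \emph{retain} the product $\prod_{j=k+1}^{s-1}R_j^{\,1-21\alpha/\lambda}\leq\delta^{(s-1-k)/2}$: it provides geometric decay in $s-k$, so the sum over $k$ is at most $2K_0L^{-1}R_s^{-14\alpha/\lambda}$ \emph{independently of the number of returns}. A minor secondary point: for free-segment indices $i$, Corollary~\ref{outside} alone does not bound $|(f_a^{\nu-i})'c_i|$, since there are later returns to $C_\delta$; you must chain with the bound-period recoveries of Lemma~\ref{reclem1}(b), as you already do for the return indices.
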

\begin{proof}
We finish the proof of the second assertion assuming the
first
 one.
The assumption on $b$ gives $b-a\geq\frac{1}{2}\cdot
{^{5}\sqrt{d(c_\nu(a),C)}}\cdot|\Delta_{\nu}(a,c)|.$ Since
$|\Delta_{\nu}(a,c)|\geq \frac{|\hat
\Delta_{\nu}(a,c))|}{9|c_n(\hat \Delta_{\nu}(a,c))|}$ and
$|c_n(\hat \Delta_{\nu}(a,c))|\leq \mathcal O(1)L^{2-\beta}$ by
Sublemma \ref{distrem1}, we have \begin{align*} b-a &\geq \mathcal
O(1)L^{\beta -2}\cdot {^{5}\sqrt{d(c_\nu(a),C)}}\cdot|\Delta_{\nu}(a,c)|\\
&\geq {^{4}\sqrt{d(c_\nu(a),C)}}D_\nu(a,c_0(a)),
\end{align*}
where the last inequality is because of $d(c_\nu(a),C)\leq\delta$.
Hence we have
\begin{align*}
|c_{\nu}(a)-c_{\nu}(b)| &\geq {K'}^{-1}
\left|c_\nu'(a)\right||a-b| \quad\text{by
Lemma \ref{samp}}\\
&\geq \mathcal O(1)|(f_a^\nu)'c_0(a)||a-b|\quad\text{by
Lemma \ref{trans}}\\
&\geq\ \mathcal O(1)L^{-4}|(f_{a}^{\nu})'c_0(a)|D_\nu(a,c_0(a))
\cdot{^5\sqrt{d(c_\nu(a),C)}}\\
&\geq{^4\sqrt{d(c_\nu(a),C)}}\quad\text{by the first
assertion}.\end{align*}

Let $0<n_1<\cdots<n_t<\nu$ denote all the free returns in the
first $\nu$ iterates of $c_0(a)$, with $p_1,\cdots,p_t$ the
corresponding bound periods. Let
$$S_{n_k}=\sum_{i=n_k}^{n_k+p_k}d_i^{-1}(c_0)\text{ and }
S_{0}=\sum_{i=0}^{\nu-1}d_i^{-1}(c_0)-\sum_{k=1}^{t} S_{n_k}.$$

\begin{sublemma}\label{quatro}
We have $ S_0|(f^\nu)'c_0|^{-1}\leq \frac{1}{\sqrt\delta}.$
\end{sublemma}
\begin{proof}

Let $i\notin\cup_{1\leq k\leq t}[n_k,n_k+p_k]$. Suppose $c_i\notin
C_{\sqrt{\delta}}$. We have $|f'c_i|\geq K_0^{-1}L\sqrt\delta$.
Split the itinerary from time to $i$ to $\nu$ into bound and free
segments. Using Lemma \ref{reclem1} to each bound segment and
Corollary \ref{outside} to each free segment, we have
$|(f^{\nu-i})'c_i|\geq L^{\lambda(\nu-i)/3}.$ Hence
$$|(f^\nu)'c_0|d_i=|(f^{\nu-i})'c_i| |f'c_i|\geq
L^{\lambda(\nu-i)/3}K_0^{-1}\sqrt\delta.$$

Suppose $c_i\notin C_{\sqrt{\delta}}$. Let $p$ denote the
corresponding bound period. By $c_i\notin C_{\delta}$ and Lemma
\ref{no}, no bound return follows $p$. Thus, we can estimate
$|(f^{\nu-i})'c_i|$ by split the itinerary of $c_0$ into free and
bound states and argue in the same way as the previous case. Thus
we have
$$|(f^{\nu-i})'c_i| |f'c_i|\geq
(L^{\lambda/3})^{\nu-i-p-1} |(f^{p+1})'c_i||f'c_i|.$$ Lemma
\ref{reclem1} gives
\begin{align*}|(f^{p+1})'c_i||f'c_i|&=^{10}\sqrt{|(f^{p+1})'c_i|}
\times ^{10}\sqrt{|(f^{p+1})'c_i|^9}|f'c_i|\geq
{^4{\sqrt{\delta}}}\cdot L^{\frac{\lambda(p+1)}{10}} .\end{align*}
These inequalities altogether yield $|(f^\nu)'c_0|d_i\geq
{^4{\sqrt{\delta}}}L^{\frac{\lambda(\nu-i)}{10}}.$

Combining the above two estimates give
\begin{equation*}
S_0|(f^\nu)'c_0|^{-1}\leq {\frac{1}{^{4}{\sqrt\delta}}}
\sum_{i=0}^{\nu-1} L^{-\frac{(\nu-i)}{10}}+
 {\frac{1}{{\sqrt\delta}}}\sum_{i=0}^{\nu-1}
L^{-\lambda(\nu-i)/3}\leq {\frac{1}{{\sqrt\delta}}}
.\end{equation*} \end{proof}

\begin{sublemma}\label{sublem2}
For every $1\leq k\leq t$ we have
$S_{n_k}|(f^{n_k+p_k+1})'c_0|^{-1} \leq
|d(c_{\nu_k},C)|^{-\frac{3\alpha}{\lambda}}.$
\end{sublemma}
\begin{proof}
Let $\tilde c$ denote the critical point to which $c_{n_k}$ is
bound. By Lemma \ref{dist}, for $n_k+1\leq i\leq n_k+p_k$ we have
\begin{align*}|(f^{n_k+p_k+1})'c_0|
d_{i}(c_0)&=\frac{|(f^{n_k+p_k+1})'c_0|}
{|(f^{i+1})'c_{0}|}\frac{|(f^{i+1})'c_0|} {|(f^{i})'c_{0}|}\\&
\geq K^{-4}\frac{|(f^{p_k})'\tilde c_0|} {|(f^{i-n_k})'\tilde
c_{0}|}\frac{|(f^{i-n_k})'\tilde c_0|} {|(f^{i-n_k-1})'\tilde
c_{0}|}.
\end{align*}
Using $(X)_{n,\tilde c}$ to estimate the two fractions we
have
$$|(f^{n_k+p_k+1})'c_0| d_{i}(c_0)\geq
K^{-4}L^2\left(\min\{\sigma,L^{-\alpha(i-n_k)}\}\right)^2.$$
Proposition \ref{reclem1} gives $|(f^{n_k+p_k+1})'c_0|d_{n_k}\geq
|d(c_{n_k},C)|^{\frac{\alpha}{\lambda}}$, and therefore
\begin{align*}
S_{n_k}|(f^{n_k+p_k+1})'c_0|^{-1}&\leq|d(c_{n_k},C)|^{-
\frac{\alpha}{\lambda}}+
K^4L^{-2}\sum_{i=n_k+1}^{n_k+p_k}\left(\min\{\sigma,L^{-\alpha(i-n_k)}\}\right)^{-2}\\
&\leq
|d(c_{n_k},C)|^{-\frac{\alpha}{\lambda}}-\sigma^{-1}\alpha^{-1}\log\sigma
+\sum_{i=0}^{p_k} L^{\alpha i}\\
&\leq |d(c_{n_k},C)|^{-\frac{3\alpha}{\lambda}}.\end{align*} For
the last inequality, we have used the upper estimate of $p_k$ in
Lemma \ref{reclem1} to estimate the last term.
\end{proof}

Write $\nu=n_{t+1}$. Since $n_{t+1}$ is an essential return we
have
\begin{equation*}|d(c_{n_k},C)|^{-1}\leq \prod_{k+1\leq j\leq t+1}
|d(c_{n_j},C)|^{-2} \quad\text{ for every }1\leq k\leq
t.\end{equation*} Substituting this into the inequality in
Sublemma \ref{sublem2} gives
\begin{equation}\label{plu}
S_{n_k}|(f^{n_k+p_k+1})'c_0|^{-1}\leq \prod_{k+1\leq j\leq
t+1}|d(c_{n_j},C)|^{-\frac{6\alpha}{\lambda}}.\end{equation}
Meanwhile, for $1\leq k\leq t-1$ we have
\begin{align*}
|(f^{\nu-n_k-p_k-1})'c_{n_k+p_k+1}|&=\prod_{k\leq j\leq t
}|(f^{n_{j+1}-n_j-p_j-1})'c_{n_j+p_j+1}| \cdot\prod_{k+1\leq
u\leq t} |(f^{p_j+1})'c_{n_j}|\\
&\geq\prod_{k+1\leq j\leq t} |(f^{p_j+1})'c_{n_j}|.\end{align*}
Multiplying (\ref{plu}) with the above inequality gives
\begin{align*}
S_{n_k}|(f^{\nu})'c_0|^{-1}&\leq
|d(c_{\nu},C)|^{-\frac{6\alpha}{\lambda}}\prod_{k+1\leq j\leq
t}\frac{1}{|d(c_{n_j},C)|^{\frac{6\alpha}{\lambda}}|(f^{p_j+1})'c_{n_j}|}\\
&\leq\delta^{(t-k)/2}|d(c_{\nu},C)|^{-\frac{6\alpha}{\lambda}}\quad\text{by
Lemma \ref{reclem1}}.\end{align*} Summing this over all $1\leq
k\leq t-1$ and (\ref{plu}) for $k=t$ gives
\begin{align*}\sum_{1\leq k\leq t}S_{n_k}|(f^{\nu})'
c_{0}| ^{-1}&\leq |d(c_{\nu},C)|^{-\frac{6\alpha}{\lambda}}\left(
1+\sum_{k=1}^{t-1}\delta^{(t-k)/2}\right)\leq
2|d(c_{\nu},C)|^{-\frac{6\alpha}{\lambda}}.
\end{align*}
Combining this with Sublemma \ref{quatro} we obtain
\begin{align*}|(f^{\nu})'c_{0}| ^{-1}D_\nu^{-1}=
L^\beta\left( \sum_{k=1}^{t}S_{n_k}|(f^{\nu})'c_{0}| ^{-1}+
S_0|(f^{\nu})'c_{0}| ^{-1}\right)\leq
\frac{1}{\sqrt{d(c_{\nu},C)}}.
\end{align*}
This completes the proof of Lemma \ref{expansion}.
\end{proof}

\subsection{Strategy}
The estimate of the measure of $A^{(n)}\setminus A^{(n+1)}$
consists of three steps. We first decompose the set into a finite
number of subsets which are characterized by combinatorial data
describing the recurrence to the critical points. Then we estimate
the measure of each of the subsets. Finally we put these estimates
together, counting the total number of combinations.

\subsection{Decomposition of the parameter set}
For $c\in C$, $q\geq1$ and $n\geq N$, let
$$B_q(c)=\left\{a\in A^{(n)}\setminus A^{(n+1)}\colon\text{  $c_0(a)$ makes exactly $q$ e.f.r.s in the first $n$ iterates}\right\}.$$
We have $A^{(n)}\setminus A^{(n+1)} \leq \bigcup_{c\in
C}\bigcup_{q\geq1}B_q(c).$ We further decompose the right hand
side as follows. For a $q$-tuple of pairs of positive integers
$\mathcal X= ((\nu_1,\tau_1),\cdots,(\nu_{q},\tau_q))$, let
$B_{\mathcal X}(c)$ denote the set of all $a\in B_q(c)$ such that
$c_0(a)$ makes essential returns exactly at
$0<\nu_1<\cdots<\nu_{q}=n$, with
$|c_{\nu_i}(a)-c^{\tau_i}|\leq\delta$ for every $1\leq i\leq q$.
In other words, $c_{\nu_i}(a)$ is bound to $c^{\tau_i}$. We write
$B_{\mathcal X}(c)= \bigcup_RB_{\mathcal X }^R(c)$, where
$$B_{\mathcal X}^R(c)=\left\{x\in B_{\mathcal X}(c)\colon
\sum_{i=1}^{q}\left[-\log d(c_{\nu_i}(a),C)\right]=R\right\},$$
where the square bracket denotes the integer part.
 For each partition $\mathcal Y= (r_1,\cdots,r_{q})$ of $R$ into $q$
 positive integers, define
$$B_{\mathcal X,\mathcal Y}^{R}(c)=\left\{a\in
B_{\mathcal X}^{R}(c)\colon \left[ -\log
d(c_{\nu_i}(a),C)\right]=r_i\text{ for every }1\leq i\leq
q\right\}.$$ This set is a finite union of intervals. We clearly
have $B_q(c)=\bigcup_{\mathcal X}\bigcup_{R}\bigcup_{\mathcal Y}
B_{\mathcal X,\mathcal Y}^{R}(c).$

\subsection{Hierarchical covering}
Let $\hat r_i=\exp\left({-\frac{r_i+1}{5}}\right)$.
 In order to estimate the measure of $B_{\mathcal
X,\mathcal Y}^{R}(c)$, we construct a {\it hierarchical covering}
of it, which consists of for each $i=1,2,\cdots,q$ a finite
collection of pairwise disjoint intervals $\{I^{(i)}_j\}$
intersecting $[0,1]$ such that:

$(i)$ $B_{\mathcal X,\mathcal Y}^{R}(c)\subset \cup _j \hat
r_i\cdot I^{(i)}_j$;

$(ii)$ for any $j$ there exists $k$ such that $I^{(i)}_j\subset
\hat r_{i-1}\cdot I^{(i-1)}_k.$

Then we clearly have $\sum_j|I_j^{(i)}|\leq \hat r_{i-1}\sum_j
|I_j^{(i-1)}|$ for each $i=q,q-1,\cdots,2$, and thus
$\sum_j|I_j^{(q)}|\leq \hat r_{1}\hat r_2\cdots\hat r_{q-1}\sum_j
|I_j^{(1)}|$. $(i)$ gives $|B_{\mathcal X,\mathcal Y}^{R}(c)|\leq
\hat r_q\sum_j|I_j^{(q)}|$, and thus $|B_{\mathcal X,\mathcal
Y}^{R}(c)|\leq \hat r _1\hat r _2\cdots\hat r
_q\sum_j|I_j^{(1)}|$. Since the intervals $\{I_j^{(1)}\}$ are
pairwise disjoint, intersect $[0,1)$, and $\leq L^{-\beta}$ in
length, we obtain
\begin{equation}
\label{meas1} |B_{\mathcal X,\mathcal Y}^{R}(c)|\leq
\exp\left(-\frac{R+q}{5}\right)\left(1+2L^{-\beta}\right).\end{equation}

For the construction of the hierarchical covering we need a couple
of lemmas. Let
$$\Delta_n(a,c,r)=\exp\left({-\frac{r}{5}}\right)\cdot\Delta_n(a,c).$$
\begin{lemma}\label{nest}
Let $a$, $b\in B_{\mathcal X,\mathcal Y}^{R}(c)$ and suppose
$\nu_i<\nu_j$. If $\Delta_{\nu_i}(a,c,r_i)\cap
\Delta_{\nu_j}(b,c,d)\neq\emptyset$ holds for some
$d\geq-\log\delta$, then $\Delta_{\nu_j}(b,c)\subset
\Delta_{\nu_i}(a,c,r_i-1)$.
\end{lemma}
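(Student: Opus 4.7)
The plan is to reduce the desired inclusion to a one-dimensional length comparison, and then bound the ratio $|\Delta_{\nu_j}(b,c)|/|\Delta_{\nu_i}(a,c)|$ using Lemma \ref{expansion} at the essential return $\nu_i$ together with the derivative growth between times $\nu_i$ and $\nu_j$ furnished by Corollary \ref{outside} and Lemma \ref{reclem1}.

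First I translate the geometric statement into an inequality. Any point of $\Delta_{\nu_j}(b,c)$ has distance at most $|a-b|+\tfrac{1}{2}|\Delta_{\nu_j}(b,c)|$ from $a$, whereas $\Delta_{\nu_i}(a,c,r_i-1)$ is the interval of radius $\tfrac{1}{2}\exp(-(r_i-1)/5)|\Delta_{\nu_i}(a,c)|$ centered at $a$. The intersection hypothesis gives $|a-b|\leq \tfrac{1}{2}\exp(-r_i/5)|\Delta_{\nu_i}(a,c)|+\tfrac{1}{2}\exp(-d/5)|\Delta_{\nu_j}(b,c)|$, and since $d\geq -\log\delta$ forces $\exp(-d/5)\leq \delta^{1/5}\ll 1$, the inclusion follows once we establish
$$|\Delta_{\nu_j}(b,c)|\;\leq\;c_0\,\exp(-r_i/5)\,|\Delta_{\nu_i}(a,c)|$$
for some absolute constant $c_0>0$.

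Next I compare the two lengths using the definition of $\Delta_\nu$, Lemmas \ref{trans} and \ref{samp}, and Sublemma \ref{distrem1}. Up to bounded multiplicative constants, $|\Delta_{\nu_i}(a,c)|$ is comparable to $D_{\nu_i}(a,c_0(a))$ times an amendment factor which is at least $1/(O(1)L^{2-\beta})$, while $|\Delta_{\nu_j}(b,c)|$ is at most $O(1)D_{\nu_j}(b,c_0(b))$. Transferring from $b$ to $a$ inside $\hat\Delta_{\nu_j}$ via Lemma \ref{samp} and applying the chain rule, the ratio is controlled by
$$\frac{|\Delta_{\nu_j}(b,c)|}{|\Delta_{\nu_i}(a,c)|}\;\leq\;\frac{C\,L^{2-\beta}}{|(f_a^{\nu_j-\nu_i})'c_{\nu_i}(a)|\cdot|(f_a^{\nu_i})'c_0(a)|\,D_{\nu_i}(a,c_0(a))},$$
with $C$ independent of $L$. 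Lemma \ref{expansion} applied at the essential return $\nu_i$ gives $|(f_a^{\nu_i})'c_0(a)|\,D_{\nu_i}(a,c_0(a))\geq \sqrt{d(c_{\nu_i}(a),C)}\geq \exp(-(r_i+1)/2)$, while combining Lemma \ref{reclem1}(b) on the bound segments and Corollary \ref{outside}(a) on the intervening free segments (as in the proof of Proposition \ref{brprop}) yields $|(f_a^{\nu_j-\nu_i})'c_{\nu_i}(a)|\geq L^{\lambda(\nu_j-\nu_i)/3}$. Thus the ratio is bounded by $C\,L^{2-\beta}\exp((r_i+1)/2)L^{-\lambda(\nu_j-\nu_i)/3}$, which is far smaller than $\exp(-r_i/5)$ for large $L$ since $r_i\leq-\log\delta+1 = \alpha N\log L+1$, $\nu_j-\nu_i\geq 1$, and $\lambda\gg\alpha$.

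The main obstacle is handling the amendment in the definition of $\Delta_\nu$ cleanly: either $\Delta=\hat\Delta$ or $\Delta$ is $\hat\Delta$ rescaled by $1/(9|c_\nu(\hat\Delta_\nu)|)$, and the two regimes can occur independently for $(a,\nu_i)$ and $(b,\nu_j)$. I would handle this by working with $\hat\Delta$ throughout, where the length comparison above is transparent, and then observing that Sublemma \ref{distrem1} bounds the amendment factor uniformly by $O(L^{2-\beta})$, so that passing back from $\hat\Delta$ to $\Delta$ inflates the ratio only by a polynomial-in-$L$ amount, which is easily absorbed by the exponential gain $L^{-\lambda(\nu_j-\nu_i)/3}$.
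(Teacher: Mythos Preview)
Your approach is genuinely different from the paper's. The paper never estimates the ratio $|\Delta_{\nu_j}(b,c)|/|\Delta_{\nu_i}(a,c)|$ via derivative growth. Instead it uses Lemma~\ref{expansion} to locate a parameter $\hat a\in\Delta_{\nu_i}(a,c,r_i)$ with $c_{\nu_i}(\hat a)\in C$; then $\hat a\notin\Delta_{\nu_j}(b,c)$, since otherwise $(f_{\hat a}^{\nu_j})'c_0(\hat a)=0$ would violate the bounded distortion of Lemma~\ref{samp} on $\hat\Delta_{\nu_j}(b,c)$. The intersection hypothesis together with $\hat a\in\Delta_{\nu_i}(a,c,r_i)\setminus\Delta_{\nu_j}(b,c)$ forces one wing of $\Delta_{\nu_j}(b,c)\setminus\Delta_{\nu_j}(b,c,d)$ to lie inside $\Delta_{\nu_i}(a,c,r_i)$, giving $\tfrac12(1-e^{-d/5})|\Delta_{\nu_j}(b,c)|\le|\Delta_{\nu_i}(a,c,r_i)|$, and the inclusion follows.

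Your direct computation, however, has a real gap in the last step. The inequality ``$r_i\le-\log\delta+1$'' is backwards: $\nu_i$ is a free return to $C_\delta$, so $d(c_{\nu_i}(a),C)\le\delta$ and hence $r_i\ge-\log\delta=\alpha N\log L$; in fact $r_i$ can be as large as roughly $\tfrac{\alpha n}{3}\log L$ by $(W)_n$. With only the bound $|(f^{\nu_j-\nu_i})'c_{\nu_i}|\ge L^{\lambda(\nu_j-\nu_i)/3}$ that you invoke, and the minimal gap $\nu_j-\nu_i\ge p_i+1$ with $p_i\ge r_i/\log L$ from Lemma~\ref{reclem1}(a), your ratio is at best $CL^{2-\beta}e^{(r_i+1)/2}e^{-\lambda r_i/3}$; since $\lambda/3<1/2$ this diverges in $r_i$ and cannot be $\le c_0e^{-r_i/5}$. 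The route can be repaired by using the \emph{first} alternative in Lemma~\ref{reclem1}(b), namely $|(f^{p_i+1})'c_{\nu_i}|\ge d(c_{\nu_i},C)^{-1+7\alpha/\lambda}\approx e^{r_i(1-7\alpha/\lambda)}$, which dominates $e^{r_i/2}$ with room to spare; but as written the argument does not close. (A secondary point: transferring $D_{\nu_j}$ from $b$ to $a$ via Lemma~\ref{samp} requires a common parameter in both $\hat\Delta$-intervals, which you should route through the intersection point rather than asserting $a\in\hat\Delta_{\nu_j}(b,c)$.)
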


\begin{proof}
By Lemma \ref{expansion}, there exists $\hat a\subset
\Delta_{\nu_i}(a,c,r_i)$ such that $c_{\nu_i}(\hat a)\in C$.
Proposition \ref{samp} and $\nu_i<\nu_j$ implies $\hat a\notin
\Delta_{\nu_j}(b,c)$, for otherwise ${\rm Dist}
(\gamma_{\nu_j},\Delta(b,\nu_j,0))$ is unbounded. This implies
that one of the connected components of
$\Delta_{\nu_j}(b,c)-\Delta_{\nu_j}(b,c,d)$ is contained in
$\Delta_{\nu_i}(a,c,r_i)$. This implies
$$2^{-1}(1-e^{-d/5})|\Delta_{\nu_j}(b,c)|\leq |\Delta_{\nu_i}
(a,c,r_i)|,$$ and hence the inclusion holds.
\end{proof}

\begin{lemma}\label{toyintersect}
Let $a$, $b\in B_{\mathcal X,\mathcal Y}^{R}(c)$. Assume that
$\Delta_{\nu_i}(a,c)\cap \Delta_{\nu_i}(b,c)\neq\emptyset$ and
$b\notin \Delta_{\nu_i}(a,c)$. Then we have $\Delta_{\nu_i}(a,c)
\subset \Delta_{\nu_i}(b,c)$.
\end{lemma}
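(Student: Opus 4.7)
The plan is to mimic the strategy of Lemma \ref{intersect2}, substituting Lemma \ref{expansion} for the role that Lemma \ref{wrap0} played there. The key idea is to locate, inside each of the two intervals $\Delta_{\nu_i}(a,c)$ and $\Delta_{\nu_i}(b,c)$, a common ``anchor'' parameter that is mapped by $c_{\nu_i}$ to the critical point $c^{\tau_i}$ that both orbits are bound to at time $\nu_i$, and then to force the inclusion from the non-containment hypothesis.

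First I would use that $a,b\in B_{\mathcal X,\mathcal Y}^{R}(c)$ share the same combinatorial data, so $|c_{\nu_i}(a)-c^{\tau_i}|\leq\delta$ and $|c_{\nu_i}(b)-c^{\tau_i}|\leq\delta$ for the \emph{same} critical point $c^{\tau_i}$, and that $\nu_i$ is an essential return for both orbits. Applying Lemma \ref{expansion} together with Lemma \ref{samp}, the image $c_{\nu_i}(\Delta_{\nu_i}(a,c))$ is (up to bounded distortion) an interval around $c_{\nu_i}(a)$ of length comparable to $|(f_a^{\nu_i})'c_0(a)|\cdot D_{\nu_i}(a,c_0(a))\gtrsim \sqrt{d(c_{\nu_i}(a),C)}\gg\delta$. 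Thus there exists $\hat a\in\Delta_{\nu_i}(a,c)$ with $c_{\nu_i}(\hat a)=c^{\tau_i}$, and likewise $\hat b\in\Delta_{\nu_i}(b,c)$ with $c_{\nu_i}(\hat b)=c^{\tau_i}$. Because the amendment in the definition of $\Delta_{\nu_i}$ ensures that the map $\hat a\mapsto c_{\nu_i}(\hat a)$ is injective on $\Delta_{\nu_i}(a,c)\cup\Delta_{\nu_i}(b,c)$ (this uses the nonempty intersection hypothesis so that the union lies in a common half-circle chart around $c^{\tau_i}$), we conclude $\hat a=\hat b$.

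Next I would exploit the hypotheses directly. The common anchor $\hat a=\hat b$ lies simultaneously in $\Delta_{\nu_i}(a,c)$ and $\Delta_{\nu_i}(b,c)$, while $b\notin\Delta_{\nu_i}(a,c)$. Together with $b\in\Delta_{\nu_i}(b,c)$, this forces one of the two connected components of $\Delta_{\nu_i}(b,c)\setminus\{\hat b\}$ to reach beyond $\Delta_{\nu_i}(a,c)$ on the side of $\hat a$ opposite to $b$, hence to engulf $\Delta_{\nu_i}(a,c)$ entirely. Therefore $\Delta_{\nu_i}(a,c)\subset\Delta_{\nu_i}(b,c)$, as claimed.

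The main obstacle is verifying that the anchor exists and is unique. This boils down to showing that Lemma \ref{expansion} (which is stated for essential returns) is applicable at $\nu_i$, and that the lower bound it yields on $|c_{\nu_i}(\Delta_{\nu_i}(a,c))|$ truly dominates $\delta$, so that $c^{\tau_i}$ is hit; together with the bounded-distortion control from Lemma \ref{samp}, this pins down a unique preimage. Once these bookkeeping points are in place, the combinatorial argument is essentially a transcription of the proof of Lemma \ref{intersect2}.
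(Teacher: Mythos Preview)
Your approach is the one the paper intends: its entire proof is ``Analogous to the proof of Lemma~\ref{intersect2},'' and you have correctly identified that Lemma~\ref{expansion} replaces Lemma~\ref{wrap0}, that one locates a common anchor $\hat a=\hat b$ with $c_{\nu_i}(\hat a)=c^{\tau_i}$, and that injectivity of $c_{\nu_i}$ on $\Delta_{\nu_i}(a,c)\cup\Delta_{\nu_i}(b,c)$ forces the two anchors to coincide.

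There is, however, a genuine gap in your final step. Knowing only that $\hat a=\hat b\in\Delta_{\nu_i}(a,c)\cap\Delta_{\nu_i}(b,c)$ and $b\notin\Delta_{\nu_i}(a,c)$ does \emph{not} give the inclusion: take $\Delta_{\nu_i}(a,c)=[0,10]$, $\Delta_{\nu_i}(b,c)=[8,14]$, $\hat a=\hat b=9$; all your hypotheses hold yet $[0,10]\not\subset[8,14]$. What makes the argument in Lemma~\ref{intersect2} go through is that the anchor lies in a \emph{small centred sub-interval} of each $\Delta$ (there, $\sqrt{\sigma}\cdot\Delta_n$). Here the analogous fact comes from the second assertion of Lemma~\ref{expansion}: since $|c_{\nu_i}(a)-c^{\tau_i}|=d(c_{\nu_i}(a),C)\ll{}^4\sqrt{d(c_{\nu_i}(a),C)}$, the preimage $\hat a$ must lie in ${}^5\sqrt{d(c_{\nu_i}(a),C)}\cdot\Delta_{\nu_i}(a,c)$, and similarly $\hat b$ lies in the corresponding small sub-interval of $\Delta_{\nu_i}(b,c)$. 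With this in hand, the length comparison from the proof of Lemma~\ref{intersect2} (one component of $\Delta_{\nu_i}(a,c)$ minus its small core is trapped inside the small core of $\Delta_{\nu_i}(b,c)$) gives $|\Delta_{\nu_i}(a,c)|\ll|\Delta_{\nu_i}(b,c)|$ and the inclusion follows. So your plan is right; you just need to sharpen where the anchor sits before invoking the combinatorial conclusion.
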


\begin{proof}
Analogous to the proof of Lemma \ref{intersect2}.
\end{proof}

We are in position to construct a hierarchical covering of
$B_{\mathcal X,\mathcal Y}^{R}(c)$. First of all we claim that for
all $a\in B_{\mathcal X,\mathcal Y}^{R}(c)$ and each
$i=1,2,\cdots,q-1$ there exists a finite sequence $a_1,a_2,\cdots$
in $B_{\mathcal X,\mathcal Y}^{R}(c) \cap\Delta_{\nu_i}(a,c)$ such
that the corresponding intervals $\{\Delta_{\nu_{i+1}}(a_j,c)\}$
are (a) pairwise disjoint, (b) contained in
$\Delta_{\nu_i}(a,r_i-1)$, and (c) altogether cover $B_{\mathcal
X,\mathcal Y}^{R}(c) \cap\Delta_{\nu_i}(a,c)$.

This claim is proved as follows. We define the finite sequence in
the statement inductively as follows. Choose some $a_{1}\in
B_{\mathcal X,\mathcal Y}^{R}(c)\cap\Delta_{\nu_i}(a,c)$. Given
$a_1,\cdots,a_k$, we are done if $ B_{\mathcal X,\mathcal
Y}^{R}(c)\cap\Delta_{\nu_i}(a,c) \subset
\cup_{j=1}^{k}\Delta_{\nu_{i+1}}(a_j,c)$. Otherwise we choose some
$a_{k+1} \in B_{\mathcal X,\mathcal Y}^{R}(c)
\cap\Delta_{\nu_i}(a,c)-\cup_{j=1}^{k} \Delta_{\nu_{i+1}}(a_j,c).$
Since the length of the intervals $\{\Delta_{\nu_{i+1}}(a_j,c)\}$
are bounded from below, this definition makes sense and the
resultant finite number of intervals altogether cover $B_{\mathcal
X,\mathcal Y}^{R}(c)\cap\Delta_{\nu_i}(a,c).$ By Lemma
\ref{toyintersect}, any two of them are either disjoint or nested.
This proves (a) (c).


By Lemma \ref{expansion}, the set
$\Delta_{\nu_i}(a,c)-\Delta_{\nu_i}(a,c,r_i)$ does not intersect
$B_{\mathcal X,\mathcal Y}^{R}(c)$. Since $a_j\in B_{\mathcal
X,\mathcal Y}^{R}(c)$ and $a_j \in\Delta_{\nu_i}(a,c)$ we have
$a_j\in \Delta_{\nu_i}(a,c,r_i)$, which gives
$\Delta_{\nu_{i+1}}(a_j,c,r_{i+1})\cap
\Delta_{\nu_i}(a,c,r_i)\neq\emptyset$. This and Lemma \ref{nest}
yield (b).

We are in position to define the intervals $\{I^{(i)}_j\}$ for
each $i=1,2,\cdots,q$. Lemma \ref{toyintersect} implies the
existence of a finite sequence $a_1^{(1)},a_2^{(1)},\cdots$ in
$B_{\mathcal X,\mathcal Y}^{R}(c)$ such that the corresponding
intervals
$\Delta_{\nu_1}(a_1^{(1)},c),\Delta_{\nu_1}(a_2^{(1)},c),\cdots$
are pairwise disjoint and altogether cover $B_{\mathcal X,\mathcal
Y}^{R}(c).$ We set $I^{(1)}_j=\Delta_{\nu_1}(a_j^{(1)},c)$. Lemma
\ref{expansion} gives $B_{\mathcal X,\mathcal Y}^{R}(c)\subset
\bigcup_j \hat r_{1}\cdot I^{(1)}_j.$

Let $i\in[1,q-1]$. Having defined
$I^{(i)}_k=\Delta_{\nu_i}(a_k^{(i)},c)$ for $k=1,2,\cdots$, we
define the intervals $\{I^{(i+1)}_j\}$ which are contained in
$\hat r_i\cdot I^{(i)}_k$ as follows. According to the above
claim, there exists a finite set $a_1^{(i+1)},a_2^{(i+1)},\cdots$
in $B_{\mathcal X,\mathcal Y}^{R}(c)\cap I^{(i)}_k$ such that any
two of the corresponding intervals
$\{\Delta_{\nu_{i+1}}(a_j^{(i+1)},c)\}$ are pairwise disjoint,
contained in $\hat r_i\cdot I^{(i)}_k$, and altogether cover
$B_{\mathcal X,\mathcal Y}^{R}(c) \cap I^{(i)}_k$. We set
$\Delta_{\nu_{i+1}}(a_j^{(i+1)},c)=I^{(i+1)}_j$. By construction
we have $B_{\mathcal X,\mathcal Y}^{R}(c)\subset\bigcup_j
I^{(i+1)}_j,$ where the union runs over all intervals
$I^{(i+1)}_j$ defined in this way. Lemma \ref{expansion} gives
$B_{\mathcal X,\mathcal Y}^{R}(c)\subset \bigcup_j \hat
r_{i+1}\cdot I^{(i+1)}_j.$ This completes the construction of the
hierarchical covering. \qed

\subsection{Conclusion}
We are in position to finish the estimate of the measure of
$A^{(\infty)}$. We begin by counting all feasible $R$, $\mathcal
Y$, $\mathcal X$, $q$.

 The next lemma asserts that the the sum of essential
return depths has a definite proportion, and as a result gives a
lower bound for $R$.
\begin{lemma}
$\sum_{i=1}^q r_i\geq\alpha n\log L/2$.
\end{lemma}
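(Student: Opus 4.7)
The plan is to show this lower bound by combining two ingredients: (1) extracting the best possible lower bound for the total depth sum from the failure of $W_n$, and (2) a combinatorial estimate that transfers this lower bound from all free returns to the essential ones.

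First, I would note that $a\in A^{(n)}\setminus A^{(n+1)}$ means $W_{n,c}$ fails for some critical point $c$, while $W_{n-1,c}$ holds because $a\in A^{(n)}$. Since $W_{n-1,c}$ already guarantees $\sum_{\text{FR }i\leq k}-\log d(c_i(a),C)\leq \alpha k\log L/3$ for all $k\in[0,n-1]$, the violation of $W_{n,c}$ must occur at $k=n$, giving
$$\sum_{\substack{i\leq n\\\text{free return}}}-\log d(c_i(a),C)\;>\;\tfrac{\alpha n\log L}{3}.$$
This is the only global quantitative input into the lemma.

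Second, I would relate this total depth sum to the essential-return depth sum $\sum_{i=1}^q-\log d(c_{\nu_i}(a),C)$. Fix two consecutive essentials $\nu_i<\nu_{i+1}$, and let $\iota_1<\cdots<\iota_s$ be the inessential free returns in $(\nu_i,\nu_{i+1})$. Each $\iota_m$ admits, by negation of (\ref{inessential}), a \emph{witness}: a prior free return $k^*_m<\iota_m$ with $2\sum_{\ell\in(k^*_m,\iota_m]\cap F}(-\log d(c_\ell,C))<-\log d(c_{k^*_m},C)$. The essentiality of $\nu_i$ forces $k^*_m\geq \nu_i$ (otherwise the essentiality inequality for $\nu_i$ applied with $i=k^*_m$ contradicts the witness inequality), so the witness function on $\{\iota_1,\dots,\iota_s\}$ takes values in $\{\nu_i,\iota_1,\dots,\iota_{s-1}\}$ and is strictly order-decreasing. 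Viewing this as a tree rooted at $\nu_i$, a geometric telescoping argument (each witness step halves the remaining admissible sum) yields
$$\sum_{m=1}^{s}-\log d(c_{\iota_m}(a),C)\;\leq\; C_0\cdot\bigl(-\log d(c_{\nu_i}(a),C)\bigr)$$
for an explicit constant $C_0$. Summing over all gaps between consecutive essentials (and a similar bound for the tail after $\nu_q=n$, which is empty by construction), I obtain $\sum_{\text{FR}}-\log d\;\leq\; (1+C_0)\sum_{i=1}^q-\log d(c_{\nu_i},C)$, so essentials capture a definite proportion of the total.

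Combining the two estimates gives $\sum_{i=1}^q-\log d(c_{\nu_i}(a),C)\geq c_1\,\alpha n\log L$ for an explicit $c_1>0$. Passing from $-\log d$ to its integer part costs at most $q$, and $q$ is controlled because distinct essential returns are separated by at least a bound period of length $\geq \alpha N$ (Lemma \ref{reclem1}(c)), so $q\leq n/(\alpha N)$, a term negligible compared to $\alpha n\log L$ for $L$ large. Tracking the constants carefully throughout delivers the claimed lower bound.

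The main obstacle is the telescoping on the witness tree: because an inessential's witness may itself be an earlier inessential rather than the essential $\nu_i$, the naive induction $A_s<\mu(\nu_i)$ loses a factor $3/2$ at each witness-step. The fix is to always choose the \emph{maximal} witness (largest admissible $j'$), which forces $\mu(\iota_j)\geq \mu(\iota_{j-1})/2$ whenever the maximal witness is not $\iota_{j-1}$; this rigidity is what makes the geometric summation close up and produces the clean constant $C_0$.
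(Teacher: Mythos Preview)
Your proposal is correct in outline and shares its two main ingredients with the paper: (1) the failure of $W_{n,c}$ at $k=n$ giving a lower bound on the total free-return depth sum, and (2) a combinatorial argument showing that essential returns carry a definite proportion of that total. Where you diverge is in the execution of (2).

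The paper does \emph{not} localise to gaps between consecutive essentials or build a witness tree. Instead it works globally: for each inessential $\mu$ it takes the \emph{minimal} witness $i(\mu)$, then greedily selects a set $F=\{\mu_1>\mu_2>\cdots\}$ of inessentials (largest first, each time jumping left of the previous $i(\mu_k)$) so that the intervals $[i(\mu_k)+1,\mu_k]$ are pairwise disjoint and cover all inessential returns. Summing the witness inequalities over $F$ gives $2\sum_{\text{inessential}}(-\log d)<\sum_{\mu\in F}(-\log d(c_{i(\mu)}))$; since the $i(\mu_k)$ are distinct free returns, the right side is at most $\sum_{\text{free}}(-\log d)$, yielding $\sum_{\text{essential}}>\tfrac12\sum_{\text{free}}$ in one line. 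This avoids entirely the telescoping/maximal-witness rigidity you describe as the ``main obstacle''.

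Your per-gap approach can also be made to work (in fact with $C_0=1$: applying the paper's greedy selection inside a single gap, the distinct witnesses lie in $\{\nu_i,\iota_1,\dots,\iota_{s-1}\}$, so $2\sum_m A_m<A_0+\sum_m A_m$), but your sketch of the telescoping via maximal witnesses is vaguer and harder to close than the minimal-witness greedy. The paper's global argument is both shorter and gives the sharp proportionality constant $\tfrac12$ directly; your route trades this cleanliness for a localisation that is not actually needed. One small point in your favour: you explicitly account for the passage from $-\log d$ to its integer part $r_i$ via the bound $q\le n/(\alpha N)$, which the paper suppresses.
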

\begin{proof}
We call a free return {\it inessential} if it is not an essential
return.
 Let $\mu\in(0,n)$ be an inessential return.
Let $i(\mu)$ denote the smallest $k\leq\mu-1$ such that
\begin{equation}\label{inessential}
\sum_{\stackrel{k+1\leq j\leq\mu}{\text{free return}}}2\log
d(c_j,C)> \log d(c_{k},C).
\end{equation}
No essential return occurs during the period $[i(\mu)+1,\mu]$. We
claim that there exists a set $F\subset(0,n)$ of inessential
returns such that the intervals $[i(\mu)+1,\mu]$ for $\mu\in F$
are mutually disjoint and cover all the inessential return times
in $(0,n)$. Indeed, it suffices to define
$F=\{\mu_1>\mu_2>\cdots>\mu_s\}$ as follows: let $\mu_1$ denote
the largest inessential return in $(0,n)$. Given $\mu_k\in F$, let
$\mu_{k+1}$ denote the largest inessential one which is $<
i(\mu_k)+1$.

Summing (\ref{inessential}) over all $\mu\in F$ gives
\begin{equation*}
\sum_{\stackrel{0\leq j\leq\nu}{\text{inessential}}}2\log
d(c_{j},C)> \sum_{\mu\in F}\log d(c_{i(\mu)},C)\geq
\sum_{\stackrel{0\leq i\leq n}{{\rm free}}}\log d(c_{i},C).
\end{equation*}
Therefore
$$\sum_{i=1}^q r_i=\sum_{\stackrel{0\leq j\leq\nu}
{\text{essential}}}-\log
d(c_{j},C)\geq-\frac{1}{2}\sum_{\stackrel{0\leq i\leq n}{{\rm
free}}}\log d(c_{i},C)\geq\frac{\alpha n\log L}{2}.$$
\end{proof}

Let $p_i$ denote the bound period for the orbit of $c$ at the free
return $\nu_i$. Let $\eta=\min\{p_1,p_2,\cdots,p_q\}$. Since $c$
is free at time $n$, we have
$$\eta (q-1)\leq \sum_{i=1}^{q-1}
p_i\leq n.$$ The upper estimate of the bound period in (a) Lemma
\ref{reclem1} gives
 $$\eta q\leq\sum_{i=1}^{q}
p_i\leq\frac{2R}{\lambda\log L}.$$ Since all the free returns
under consideration are to the inside of $C_\delta$, the lower
estimate in (a) of Lemma \ref{reclem1} gives $\eta\geq\alpha N$.
Hence, for sufficiently large $N$ we have
$$q/n,q/R\leq \frac{2}{\alpha N}.$$

By Stirling's formula for factorials, the number of all feasible
$\mathcal Y=(r_1,\cdots,r_q)$ is bounded by the total number of
combinations of dividing $R$ objects into $q$ groups,
which is $\left(\begin{smallmatrix}R+q\\
q\end{smallmatrix}\right)\leq e^{c(N)R}$ , where $c(N)\to0$ as
$N\to\infty$.  Similarly, the number of all feasible $\mathcal
X=((\nu_1,\tau_1), \cdots,(\nu_q,\tau_q))$ is $\leq
e^{c'(N)n}\cdot (\sharp C)^q \leq e^{c''(N)n},$ where $c''(N)\to0$
as $N\to\infty$.

For $n\geq N$ we have
\begin{align*}\left|A^{(n)}\setminus
A^{(n+1)}\right|&\leq \sum_{c\in C}\sum_q \sum_{\mathcal X
}\sum_{\mathcal Y} \sum_{R\geq\alpha n \log L/2}|B_{\mathcal X,\mathcal Y}^R(c)|\\
&\leq\sum_{c\in C}\sum_q \sum_{R\geq\alpha n\log L/2}
\exp\left(c(N)R+c''(N)n-\frac{{R+q}}{5}\right)(1+2L^{-\beta})\\
&\leq\sum_{R\geq\alpha n\log L/2}
\exp\left(c(N)R+c'''(N)n-\frac{{R}}{6}\right)(1+2L^{-\beta})\\
&\leq\sum_{R\geq\alpha n\log L/2} \exp\left(-\frac{{R}}{7}\right),
\end{align*}
where $c'''(N)\to0$ as $N\to\infty$. Substituting the estimate of
$|A^{(N)}|$ in Proposition \ref{initial} gives
\begin{align*}\frac{|A^{(n)}\setminus A^{(n+1)}|}
{|A^{(N)}|}\leq (1-{^3\sqrt\sigma})^{-N-1}\sum_R
\exp\left(-\frac{{R}}{7}\right)\leq L^{-\frac{\alpha n}{15}}
.\end{align*} Hence we obtain \begin{align*}
|A^{(\infty)}|&=|A^{(N)}|-\sum_{n\geq
N}|A^{(n)}\setminus A^{(n+1)}|\\
&\geq |A^{(N)}|\left(1-\sum_{n\geq N}L^{-
\frac{\alpha n}{15}}\right)\\
&\geq (1-{^3\sqrt\sigma})^{N+1}\left(1-\sum_{n\geq N}L^{-
\frac{\alpha n}{15}}\right).\end{align*} This completes the proof
of Proposition \ref{measure} and hence that of the main theorem.
\qed

\bibliographystyle{amsplain}

\end{document}